\definecolor{darkblue}{rgb}{0,0,.5}
\numberwithin{equation}{section}
\font\tencyr=wncyr10 %scaled \magstephalf
\font\tencyi=wncyi10 %scaled \magstephalf
\font\tencysc=wncysc10 %scaled \magstephalf
\def\rus{\tencyr\cyracc}
\def\rusi{\tencyi\cyracc}
\def\rusc{\tencysc\cyracc}
\newtheorem{thm}{Theorem}[section]
\newtheorem{lm}[thm]{Lemma} %%[section]
\newtheorem{cl}[thm]{Corollary}
\newtheorem{prop}[thm]{Proposition}
\theoremstyle{remark}
\newtheorem{ex}[thm]{Example}%[section]
\newtheorem{rmk}[thm]{Remark}%[section]
\theoremstyle{definition}
\newtheorem{df}[thm]{Definition}%%[section]
\newcommand{\gt}{\mathfrak}
\newcommand{\SL}{{\rm SL}}
\newcommand{\id}{{\rm id}}
\newcommand{\ind}{{\rm ind\,}}
\newcommand{\rk}{\mathrm{rk\,}}
\newcommand{\Lie}{\mathrm{Lie\,}}
\newcommand{\Ker}{{\rm Ker\,}}
\newcommand{\g}{\tilde{\mathfrak g}}
\newcommand {\ca}{{\mathcal A}}
\newcommand {\cF}{{\mathcal F}}
\newcommand {\cj}{{\mathcal J}}
\newcommand {\cH}{{\mathcal H}}
\newcommand {\cS}{{\mathcal S}}
\newcommand {\mK}{{\mathbb K}}
\newcommand {\Z}{{\mathbb Z}}
\newcommand {\An}{{\mathbb A^n}}
\newcommand{\ard}{\rightsquigarrow}
\renewcommand{\le}{\leqslant}
\renewcommand{\ge}{\geqslant}
\font\euszw=eusm10 scaled 1200%
\font\eusac=eusm7 scaled 1200%
\font\eusacc=eusm7 scaled 1000%
\begin{document}
\hfill {\scriptsize  February 14, 2012}%%\today}%% September 16, 2011} %%\today}
\vskip1ex

\title[Contractions of Lie-Poisson brackets]
{One-parameter contractions of Lie-Poisson brackets}
%%%%$\mbox{({\it preliminary version}})$}
\author{Oksana Yakimova}
\address{Emmy-Noether-Zentrum, Department Mathematik, 
Friedrich-Alexander Universit\"at Erlangen-N\"urnberg}
\curraddr{Mathematisches Institut,  Friedrich-Schiller-Universit\"at Jena,
D-07737 Jena }
\email{oksana.yakimova@uni-jena.de}
%\thanks{The author is supported by the Max-Palnck Society}
%\keywords{Nilpotent orbits, centralisers, symmetric invariants}
%\subjclass[2000]{17B25}
\maketitle

%%\tableofcontents

\section{Introduction}

Let $\gt q$ be a finite-dimensional Lie algebra defined over a field $\mK$ of characteristic zero. 
Then the symmetric algebra $\cS(\gt q)=\mK[\gt q^*]$ carries a Poisson structure induced by the Lie bracket on $\gt q$. 
In this paper, we study the algebra $\cS(\gt q)^{\gt q}$ of symmetric 
invariants. By a theorem of Duflo, it is isomorphic to  the centre 
$Z{\bf U}(\gt q)$ of the universal enveloping algebra ${\bf U}(\gt q)$, 
and therefore is of much interest in representation theory. 
We can also say that  $\cS(\gt q)^{\gt q}$ coincides with the Poisson 
centre $Z\cS(\gt q)$ of $\cS(\gt q)$ (for the definition of this object see 
Section~\ref{PS}), and one can employ methods of Poisson geometry 
to investigate this algebra.  

To be more precise, the Lie algebra in question is a contraction of 
some other Lie algebra, whose symmetric invariants are well 
understood.  Already contractions of simple (non-Abelian) 
Lie algebras provide a fairy interesting and not yet completely 
explored field of research. Let $\gt f\subset\gt  q$ be a Lie subalgebra 
and $V\subset\gt q$ a complementary subspace, not necessarily 
$\gt f$-stable. Then one contracts $\gt q$ to a Lie algebra 
$\tilde{\gt q}=\gt f\ltimes V$, where $V$ is an Abelian ideal and the action of $\gt f$ on it comes from the projection ${\rm pr}_{V}: \gt q\to  V$
along $\gt f$. A more sophisticated description of contractions 
of Poisson and Lie algebras is given in Section~\ref{sec:contr}. 

Suppose that $\gt g$ is a reductive Lie algebra. 
Let $F_1,\ldots,F_\ell$ with $\ell=\rk\gt g$ be homogeneous generators of 
$Z{\cS}(\gt g)$. Then, by  \cite[Theorem 9]{ko63}, their differentials 
$d_\xi F_i$ at a point $\xi\in\gt g^*$ are linear independent if and only if 
$\dim\gt g_\xi=\ell$ for the stabiliser in the coadjoint action. 
This is known as Kostant's regularity criterion.
For an arbitrary Lie algebra $\gt q$, the notion of {\it index}, 
$\ind\gt q=\min\limits_{\xi\in\gt q^*}\dim\gt q_\xi$,
generalises the rank in the reductive case.  
A Lie algebra $\gt q$ of index $\ell$ is said to be of {\it  Kostant type}, if 
$Z\cS(\gt q)$ is freely generated 
by homogeneous polynomials  $H_1,\ldots,H_\ell$ 
such that they give Kostant's regularity criterion on $\gt q^*$. 
Set $\gt q^*_{\rm sing}:=\{\xi\in\gt q^*\mid \dim\gt q_\xi > \ind\gt q\}$.
We say that $\gt q$ has a {\it ``codim-2'' property} or satisfies a 
{\it ``codim-2'' condition}, if $\dim\gt q^*_{\rm sing}\le \dim\gt q-2$. 
The importance of this condition was first noticed in \cite{ppy} and 
\cite{Dima06}. 

The decomposition $\gt q=\gt f{\oplus} V$ induces a bi-grading on $\cS(\gt q)$.
For each homogeneous $F\in\cS(\gt q)$, let $\deg_t F$ denote its degree in $V$ 
and $F^\bullet$ the bi-homogeneous component of $F$ of bi-degrees $(\deg F-\deg_t F,\deg_t F)$ in $\gt f$ and $V$, respectively. 
In case $\gt q=\gt g$ is reductive and $\g$ is the contraction of $\gt g$ corresponding 
to the decomposition $\gt g=\gt f{\oplus}V$, a simplification of our main result, Theorem~\ref{contr-deg},
can be formulated as follows.  

Suppose that $\ind\g=\ell=\rk\gt g$. 
Then 
\begin{itemize}
\item $\sum\deg_t F_i\ge \dim V$ and the polynomials 
$F_i^\bullet$ are algebraically independent if and only if 
$\sum\deg_t F_i = \dim V$. 
\item Moreover, if the equality holds 
and the polynomials $F_i^\bullet$ generate $Z\cS(\g)$ (this can be guarantied by the 
``codim-2'' property of $\g$), then $\g$ is of Kostant type. 
\end{itemize}
The proof of Theorem~\ref{contr-deg} relies on Lemma~\ref{sleva-sprava}, 
a statement about Poisson brackets in the algebraic setting, and 
good behaviour of the Poisson tensor under contractions.   
The resulting algebras $\g$ are non-reductive and there is no general method for
describing their symmetric invariants. 

Note that 
Theorem~\ref{contr-deg} is stated and proved for arbitrary polynomial Poisson algebras that are not 
necessary symmetric algebras of any finite-dimensional $\gt q$. We do not consider applications of the more general version 
in this paper, but hope to explore this subject in the (near) future. 

Two types of contractions $\gt g\ard\g$ are studied here. 
In both cases it is assumed that 
the ground filed $\mK$ is algebraically closed. The first contraction  
comes from a $\Z_2$-grading (or symmetric decomposition) 
$\gt g=\gt g_0{\oplus}\gt g_1$ of $\gt g$. It was conjectured by 
D.\,Panyushev \cite{Dima06}, that in this setting $\cS(\g)^{\g}$ is a polynomial algebra in $\ell$ 
variables. As was shown in \cite{Dima06}, $\ind\g=\ell$ and $\g$ has the ``codim-2'' property. 
Also for many $\Z_2$-gradings the polynomiality of $\cS(\g)^{\g}$ was established in that paper 
of Panyushev. For four of the remaining cases, 
we construct homogeneous generators $F_i$ such that $\sum\deg_t F_i\le \dim\gt g_1$. Since also $\sum\deg_t F_i\ge\dim \gt g_1$ by Theorem~\ref{contr-deg}, we get the equality 
$\sum\deg_t F_i = \dim\gt g_1$
and thereby 
prove that their components $F_i^\bullet$ freely generate $\cS(\g)^{\g}$.   
This line of argument resemblances proofs of \cite[Theorems~4.2\&4.4]{ppy}. 
Our result confirms a weaker version of Panyushev's conjecture. If the restriction homomorphism 
$\mK[\gt g]^{\gt g}\to\mK[\gt g_1]^{\gt g_0}$ is surjective, then  
 $\cS(\g)^{\g}$ is a polynomial algebra in $\ell$ 
variables, see Theorem~\ref{goodpairs}.

The second contraction  $\gt g\ard\g$ was recently introduced by E.\,Feigin \cite{feigin1} and 
for the resulting Lie algebra, 
$\g$-invariants in $\cS(\g)$ and $\mK[\g]$ were studied in \cite{py-feig}. 
Here the decomposition is $\gt g=\gt b{\oplus}\gt n^-$, where $\gt b=\Lie B$ is a Borel subalgebra and 
$\gt n^{-}$ is the nilpotent radical of an opposite Borel. 
Complementing and relying on results of \cite{py-feig}, we show that $\g$ is of Kostant 
type (Lemma~\ref{feig-kost}), compute its fundamental semi-invaraint (see Definition~\ref{def-fund}
and Theorem~\ref{fund-f}), and prove that the subalgebra $\cS(\g)_{\rm si}\subset\cS(\g)$ generated 
by semi-invariants of $\g$ (Definition~\ref{semi}) is a polynomial algebra in $2\ell$ variables, 
Theorem~\ref{semi-f}.  If $\gt g$ is not of type $A$, then $\g$ does not have 
the ``codim-2" property. However, the quotient map 
$\mK[\g^*]\to \mK[\g^*]^{\g}$ is equidimensional and ${\bf U}(\g)$ is a free 
$Z{\bf U}(\g)$-module \cite{py-feig}. 

Finally, Section~\ref{orbital} contains a few observations related to subregular orbital 
varieties $\gt D_i$, which are linear subspaces of $\gt n$ of codimension $1$ forming 
the complement of the open $B$-orbit in $\gt n$. In particular, in Proposition~\ref{Ab}, 
we list all $\gt D_i$ such that the stabiliser $B_x$ is Abelian for a generic $x\in\gt D_i$.  

\noindent
{\bf Acknowledgements.}  Main part of this paper was written
in Bonn, while I was on  medical leave from FAU Erlangen-N\"urnberg. 
It is a pleasure to express my gratitude to Doctor J\"urgen Martin
(Facharzt f\"ur Chirurgie)
for his kindness and the outmost care and attention, with which he handled my injured hand.

\section{Generalities on polynomial Poisson structures}\label{PS}

In this section, we recall a rather important equality 
in Poisson algebras, which has an origin in
%%%..... Odesskii-Rubtsov i t.d ..... We recall a certain equality, discovered in the 
mathematical physics  \cite{or}.
%%%%jungle and brought in the daylight in  \cite{ppy}. 

Let $\mK$ be a field of characteristic zero and
$\mathbb A^n=\mathbb A^n_{\mathbb K}$ the $n$-dimensional
affine space with the algebra of regular functions ${\mathcal A}=\mathbb K[x_1,\ldots,x_n]$. 
Let $\Omega$ be the algebra of regular, i.e., with polynomial coefficients,
 differential forms on $\An$ and 
$W$  the algebra of  derivations of 
$\mathcal A$. Both are free $\mathcal A$-modules with bases
consisting of skew-monomials in $dx_i$ and $\partial_i=\partial_{x_i}$, respectively. In other words, 
$W$ is a graded skew-symmetric algebra generated by polynomial vector fields on $\An$. 
We identify $\Omega^0$ with $\mathcal A$ and regard
$\Omega^1$ as the $\mathcal A$-module of global sections of the
cotangent bundle $T^*\mathbb A^n$. 
Let $W^1$ be an $\ca$-module  generated by  $\partial_i$ with $1\le i\le n$.
We view the exterior powers $\Omega^k=\Lambda_{\ca}^k\Omega^1$ and 
$W^k:=\Lambda_{\ca}^kW^1$ as dual $\ca$-modules by extending the canonical non-degenerate $\ca$-pairing
$dx_i(\partial_j)=\delta_{ij}$.

Let $\omega=dx_1\wedge\ldots\wedge dx_n$ be the  volume form. 
If $f$ and $g$ are elements of $\Omega^k$ and $\Omega^{n-k}$,
respectively, then $f\wedge g=a\omega$ with $a\in\ca$.
We will say that in this situation $a=(f\wedge g)/{\omega}$ and 
$f/\omega$ is an element of $(\Omega^{n-k})^*$ such that 
$(f/\omega)(g)=a$. 
This defines an $\ca$-linear map 
$$
\frac{1}{\omega}: \Omega^k \to (\Omega^{n-k})^*\cong W^{n-k}.
$$

Suppose that ${\ca}$ possesses a Poisson structure 
$\{\,\,,\,\}\colon\,{\mathcal A}\times{\mathcal A}\to {\mathcal A}$ and let $\pi$ denote the corresponding {\it Poisson tensor (bivector)}, the element of $\mathrm{Hom}_{\mathcal A}(\Omega^2,{\mathcal A})$ 
satisfying $\pi(df\wedge dg)=\{f,g\}$
for all $f,g\in{\mathcal A}$. 
(It is {\it not} assumed that the coefficients of $\pi$ are linear functions.) 
In view of the duality between forms and vector fields,
we may regard $\pi$ as an element of $W^2$.
For $\xi\in \mathbb A^n$,
$\pi_\xi$ can be viewed as a skew-symmetric matrix with entries 
$\{x_i,x_j\}(\xi)$.
The {\it index} of the Poisson algebra ${\mathcal A}$, denoted
$\ind\mathcal A$, is defined as
$$
\ind\mathcal A:=n-\rk\pi, \ \text{where } \ \rk\pi=\max_{\xi\in \mathbb A^n}\,\rk\pi_\xi.
$$
An element $a\in\ca$ is said to be {\it central}, if $\{a,\ca\}=0$.
Correspondingly, the set $Z{\mathcal A}=Z(\ca,\pi)$ of all central elements is called  the 
{\it Poisson centre} of ${\mathcal A}$.

Set $\mathrm{Sing}\,\pi:=\,\{\xi\in\mathbb A^n\,|\,\,
\rk\pi_\xi<\rk\pi\}$. Clearly, $\mathrm{Sing}\, \pi$ is
a proper Zariski closed subset of $\mathbb A^n$. By definition, 
$\pi(da\wedge db)=0$ for all $a\in Z{\mathcal A}$
and all $b\in{\mathcal A}$. Hence the linear subspace $\{d_\xi a\mid a\in Z{\mathcal A}\}$ lies in the kernel of $\pi_\xi$ and
we have 
$$ 
\mathrm{tr.\,deg}_{\mathbb K}\,Z\ca\le\ind\mathcal A.
$$

For $g_1,\ldots, g_m\in\mathcal A,$ the {\it Jacobian locus} ${\cj}(g_1,\ldots,g_m)$ consists of all
$\xi\in{\mathbb A}^n$ such that the differentials 
$d_\xi g_1,\ldots,d_\xi g_m$ are linearly dependent. 
In other words, $\xi\in {\cj}(g_1,\ldots,g_m)$ if and only if 
$(dg_1\wedge\ldots\wedge dg_m)_\xi=0$.
The set ${\mathcal J}(g_1,\ldots,g_m)$ is Zariski closed in $\mathbb A^n$ and it
coincides with $\mathbb A^n$ if and only if $g_1,\ldots, g_m$ are
algebraically dependent.

Given $k\in\mathbb N$ we let
$$
\Lambda^k\pi:=\,\underbrace{\pi\wedge\pi\wedge\ldots\wedge \pi}_{k \ \scriptstyle{\mathrm{factors}}}\,,
$$ 
be an element of
$W^{2k}$. Note that $\Lambda^k\pi\ne 0$ if and only if 
$\pi_\xi$ contains a non-zero $2k{\times}2k$-minor for some $\xi\in\An$.
Therefore $\Lambda^k\pi =0$ for $k>(\rk\pi)/2$ and $\Lambda^k\pi\ne 0$
for $k\le(\rk\pi)/2$. 
 The following statement can be extracted from 
the proofs of  \cite[Theorem 3.1]{or}, \cite[Theorem~1.2]{ppy}, 
\cite[Theorem~1.2]{Dima06}.

\begin{lm} \label{sleva-sprava}
Let ${\mathcal A}=\mathbb K[x_1,\ldots,x_n]$ be a
Poisson algebra of index $\ell$ and  let 
$\{F_1,\ldots,F_\ell\}\subset Z{\mathcal A}$ be a set of algebraically independent 
polynomials. Then there are coprime $q_1,q_2\in\ca\setminus\{0\}$ such that
$$
q_1\frac{dF_1\wedge\ldots\wedge dF_\ell}{\omega}=q_2 \Lambda^{(n-\ell)/2}\pi\,.
$$
\end{lm}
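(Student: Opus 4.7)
Both sides of the claimed identity live in the free $\ca$-module $W^{n-\ell}$. My plan is to show that they are proportional after passage to the fraction field, i.e.\ that there exists a non-zero rational function $r\in\mathrm{Frac}(\ca)$ with $(dF_1\wedge\ldots\wedge dF_\ell)/\omega=r\cdot\Lambda^{(n-\ell)/2}\pi$ in $W^{n-\ell}\otimes_{\ca}\mathrm{Frac}(\ca)$; writing $r=q_2/q_1$ in lowest terms then produces the coprime pair of polynomials required by the statement. To establish such a proportionality it suffices to check it pointwise at every $\xi$ in a dense open subset of $\An$.

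Take $U$ to be the complement of $\mathrm{Sing}\,\pi\cup\cj(F_1,\ldots,F_\ell)$, which is non-empty since $F_1,\ldots,F_\ell$ are algebraically independent. For $\xi\in U$, $\pi_\xi$ is a skew-symmetric $n\times n$ matrix of rank exactly $n-\ell$ and $d_\xi F_1,\ldots,d_\xi F_\ell$ are linearly independent. Both specialisations are therefore non-zero decomposable $(n-\ell)$-vectors in $\Lambda^{n-\ell}W^1_\xi$: the left-hand one by construction, the right-hand one because the top non-vanishing exterior power of a rank-$2k$ skew bivector is the Pfaffian-type volume on its image and in particular decomposable. Since two non-zero decomposable $k$-vectors are proportional exactly when they cut out the same $k$-plane, the problem reduces to identifying the $(n-\ell)$-planes in $W^1_\xi$ represented by the two vectors.

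Now $F_i\in Z\ca$ means $\pi(dF_i\wedge dg)=\{F_i,g\}=0$ for every $g\in\ca$, so each $d_\xi F_i$ lies in the kernel of the skew map $\pi_\xi\colon\Omega^1_\xi\to W^1_\xi$, and by a dimension count the $d_\xi F_i$ span $\Ker\pi_\xi$. The decomposable $(n-\ell)$-vector $(dF_1\wedge\ldots\wedge dF_\ell)_\xi/\omega$ then represents, by the very definition of the map $1/\omega$, the annihilator of $\Ker\pi_\xi$ in $W^1_\xi$. On the other hand $\Lambda^{(n-\ell)/2}\pi_\xi$ represents $\Im\pi_\xi\subset W^1_\xi$, and the skew-symmetry of $\pi_\xi$ gives $\Im\pi_\xi=(\Ker\pi_\xi)^\perp$ under the canonical duality between $W^1_\xi$ and $\Omega^1_\xi$. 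Both vectors therefore cut out the same $(n-\ell)$-plane and are proportional, as required.

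I expect the main technical point to be the identification of the plane represented by $\Lambda^{(n-\ell)/2}\pi_\xi$ with $\Im\pi_\xi$; this is the classical linear-algebra lemma, proved by a Darboux-type normal form for the restriction of $\pi_\xi$ to a complement of $\Ker\pi_\xi$, and is precisely the ingredient on which the cited arguments of \cite{or,ppy,Dima06} rest. Everything else, including the passage from pointwise proportionality at generic $\xi$ to an identity of rational sections and the clearing of denominators, is routine.
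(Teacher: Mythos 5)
Your argument is correct and follows essentially the same route as the paper's: reduce to pointwise proportionality at a generic $\xi$ where $\rk\pi_\xi=n-\ell$ and the $d_\xi F_i$ are linearly independent (hence span $\Ker\pi_\xi$), then pass to an identity of rational sections and clear denominators to obtain coprime $q_1,q_2$. The only cosmetic difference is that you compare the $(n-\ell)$-planes cut out by two decomposable multivectors (which requires the Darboux-type decomposability of $\Lambda^{(n-\ell)/2}\pi_\xi$), whereas the paper compares the kernels of the two functionals on $\Lambda^{n-\ell}T^*_\xi\An$ and thereby sidesteps decomposability altogether.
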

\begin{proof} 
Set ${\mathcal F}=dF_1\wedge\ldots\wedge dF_\ell$.
Because of the  inequality: ${\rm tr.\,deg}\,Z(\ca)\le \ell$, 
the polynomials $F_1,\ldots,F_{\ell}$, and $F$ are algebraically  dependent
for each $F\in Z\ca$ and therefore ${\mathcal F}\wedge F=0$. Clearly 
$\pi(dF,\,.\,)=0$ and hence $\Lambda^{n-\ell/2}\pi$ is zero
on $dF\wedge\Omega^{n-\ell-1}$.

Changing the ordering of the coordinates, we may assume that
${\mathcal F}\wedge dx_1\wedge\ldots\wedge dx_{n-\ell}\ne 0$.
Let $\xi\in\An$ be such that $\rk\pi_\xi=n-\ell$ and the elements 
$d_\xi F_i$ together with $\{dx_j\mid j\le n-\ell\}$ form a basis of 
$T^*_\xi\An$. Then 
$$
\Lambda^{n-\ell}(T^*_\xi\An)=\Ker\left(\frac{{\mathcal F}}{\omega}\right)_{\!\!\xi}\oplus{\mK} (dx_1\wedge\ldots\wedge dx_{n-\ell}).
$$
Note that
here the kernel of $\left({\mathcal F}/{\omega}\right)_\xi$ lies also in the kernel
of $\Lambda^{(n-\ell)/2}\pi_\xi$. Next $\Lambda^{(n-\ell)/2}\pi_\xi\ne 0$. 
Therefore $\left({\mathcal F}/{\omega}\right)_\xi$ is proportional to 
$\Lambda^{(n-\ell)/2}\pi_\xi$. We can conclude that  
${\mathcal F}/{\omega}$ and $\Lambda^{(n-\ell)/2}\pi$
are  proportional on an open subset of $\An$.
It follows that there exist non-zero coprime $q_1,q_2\in{\mathcal A}$ such that
$q_1({\mathcal F}/{\omega})=q_2\Lambda^{(n-\ell)/2}\pi$.
\end{proof}

Of particular interest are situations where $q_1,q_2\in\mK$ for 
$q_1,q_2$ as above. This can be guarantied by ``codim-2" conditions, 
see e.g. \cite[Theorem~1.2]{ppy}. If $\dim\mathrm{Sing}\,\pi\le n-2$, 
then $q_1$ must be a scalar. If $\dim\cj(F_1,\ldots,F_\ell)\le n-2$, 
then $q_2$ must be a scalar. 

In case $\pi$ is homogeneous, i.e., all the 
(polynomial) coefficients of $\pi$ are of the same degree, we can say that 
$\deg\Lambda^k\pi=k\deg\pi$. Suppose that $\pi$ and all the $F_i$'s are 
homogeneous. Then $q_1,q_2$ are also homogeneous and
$$
\deg q_1-\ell+\sum_{i=1}^{\ell}\deg F_i=\deg q_2+\frac{n-\ell}{2}\deg\pi\,.
$$
If, for example, $2\sum\deg F_i=2\ell+(n-\ell)\deg\pi$, then 
$\deg q_1=\deg q_2$  and knowing that $q_1$ is constant, we also know that
$q_2$ is a constant. 
 
Poisson tensors of degree $1$ correspond to finite-dimensional Lie algebras
$\gt q$ over $\mK$. In this case $\An=\gt q^*$ is the dual space of 
an $n$-dimensional Lie algebra $\gt q$ and $\ca=\cS(\gt q)=\mK[\gt q^*]$
is the symmetric algebra of $\gt q$. Set $\ind\gt q=\ind \cS(\gt q)$ and 
$\gt q^*_{\rm sing}=\mathrm{Sing}\,\pi$. Note that 
$$
\ind\gt q=\min_{\gamma\in\gt q^*} \dim\gt q_\gamma=\dim\gt q_\alpha
\mbox{ for all } \alpha\in \gt q^*_{\rm reg}=\gt q^*\setminus\gt q^*_{\rm sing}\,.
$$ 
It is also worth mentioning that $Z\cS(\gt q)=\cS(\gt q)^{\gt q}$.

Suppose that $\gt g$ is a non-Abelian reductive Lie algebra, then $\ind\gt g=\rk\gt g$,
the algebra of symmetric invariants  $Z{\cS}(\gt g)$ is freely generated 
by homogeneous polynomials $F_1,\ldots,F_\ell$ with $\ell=\rk\gt g$, and
$2\sum\deg F_i=n+\ell$. Moreover, $\dim\gt g^*_{\rm sing}=n-3$.
Therefore, after a suitable renormalisation, 
\begin{equation}\label{Kostant}
\frac{dF_1\wedge\ldots\wedge dF_\ell}{\omega}= \Lambda^{(n-\ell)/2}\pi\,.
\end{equation}
This is known as Kostant's regularity criterion: $x\in\gt g^*_{\rm reg}$ if and 
only if the differentials $d_xF_i$ are linear independent, \cite[Theorem 9]{ko63}. 

\begin{df}\label{Kostant-def}
Equation~\eqref{Kostant}  is called the {\it  Kostant equality} and 
we will say that a Poisson algebra $\ca$ (or a Lie algebra $\gt q$)
is of {\it  Kostant type}, if $Z\ca$ is generated by $\ell$ 
polynomials 
satisfying the Kostant equality.
\end{df}

Apart from reductive and Abelian Lie algebras, 
examples of  Lie algebras of Kostant type are provided by 
the centralisers $\gt g_e$ of nilpotent elements in $\gt{sl}_m$ and $\gt{sp}_{2m}$ \cite{ppy}, truncated seaweed (biparabolic) subalgebras of  $\gt{sl}_m$ and $\gt{sp}_{2m}$ \cite{j}, and 
semi-direct products related to symmetric decompositions $\gt g=\gt g_0{\oplus}\gt g_1$, see \cite{Dima06} and Section~\ref{sec:symm} here. 

Let $(e,h,f)$ be an $\gt{sl}_2$-triple in $\gt g=\Lie G$. Then ${\bf S}_e=e+\gt g_f$
is the {\it Slodowy slice} of $Ge$ at $e$ and $\mK[{\bf S}_e]$ inherits a Poisson bracket from $\gt g(\cong \gt g^*)$. These Poisson algebras are of Kostant type and they are 
associated graded algebras of the finite $W$-algebras, see \cite{sasha} and \cite[Section~2]{ppy}. 
Note that all mentioned above Poisson and Lie algebras of Kostant type have the ``codim-2" property.

%%%%%%%%%%%%%%SECTION%%%%%%%%%%%
\section{Contractions  of Poisson tensors}\label{sec:contr}

We begin with a definition of a contraction in the Lie algebra setting. 
Let $\gt q$ be a Lie algebra, $\gt f\subset\gt q$ a 
Lie subalgebra, and $V\subset\gt q$ a complimentary 
(to $\gt f$) subspace. We do not require $V$ to be 
$\gt f$-stable. For each $t\in \mK^{^\times}$, let 
$\varphi_t:\gt q\to\gt q$ be a linear map 
multiplying vectors in $V$ by $t$ and vectors in 
$\gt f$ by $1$.  These automorphisms form a one-parameter subgroup 
in ${\rm GL}(\gt q)$.
%%% If $B=\{\xi_1,\ldots,\xi_s,v_1,\ldots,v_r\}$ is a basis 
%%% of $\gt f{\oplus} V$, then 
%%% $\varphi_t(B)=\{\xi_1,\ldots,\xi_s,tv_{1},\ldots,tv_{r}\}$ is a basis 
%%% of $\varphi_t(\gt q)$ and in $\varphi_t(B)$
%%%% the Lie bracket look as follows:
%%% $[\xi_i,\xi_j]$ is the same as before, 
%%% $[\xi_i,tv_k]=t\sum\limits_{j=1}^{s} c_{ik}^j\xi_j+ 
 %%  \sum\limits_{j=1}^{r} a_{ik}^j tv_j$, 
%%% $[tv_i,tv_k]=t^2\sum\limits_{j=1}^{s}b_{ik}^j\xi_j+ 
 %%  t\sum\limits_{j=1}^{r}d_{ik}^j tv_j$,
%%% where $c_{ik}^j,a_{ik}^j,b_{ik}^j,d_{ik}^j$ are structural constants of 
%% $\gt q$. Therefore, identifying $B$ and 
%%% $\varphi_t(B)$, we can consider $\varphi_t(\gt q)$ 
%% as a deformation of the Lie algebra $\gt q$.
Each $\varphi_t$ defines 
a new Lie algebra structure $[\,\,,\,]_t$ on the same vector space 
$\gt q$. Let ${\rm pr}_{\gt f}$ and ${\rm pr}_{V}$ be the projection 
on $\gt f$ and $V$, respectively.  Then
$$
[\xi,\eta]_t=[\xi,\eta], \ [\xi,v]_t=t{\rm pr}_{\gt f}([\xi,v])+{\rm pr}_V([\xi,v]), \
[v,w]_t=t^2{\rm pr}_{\gt f}([v,w])+t{\rm pr}_V([v,w]), 
$$
for
$\xi,\eta\in\gt f$, $v,w\in V$.
We can pass to the limit  $\lim_{t\to 0}[\,\,,\,]_t$ and get yet another 
Lie algebra structure on $\gt q$. 
Let $\tilde{\gt q}$ denote this 
contraction of  $\gt q$. Then $\tilde{\gt q}=\gt f\ltimes V$, 
where $V$ is an Abelian ideal of $\tilde{\gt q}$ and the action 
of $\gt f$ on $V$ is given by ${\rm pr}_V$. (The reader feeling uncomfortable with taking the limit, although it 
can be defined in a purely algebraic setting, may assume that $t$ takes values in $\mathbb Q\subset\mK$.) 

In a coordinate free way, 
the $t$-commutator $[\,\,,\,]_t$ or the $t$-Poisson bracket  
$\{\,\,,\,\}_t$ is defined by 
$$
\{x,y\}_t=\varphi_t^{-1}(\{\varphi_t(x),\varphi_t(y)\})
$$
for $x,y\in\gt q$. Extending $\varphi_t$ to the symmetric algebra 
$\cS(\gt q)$ as well as to $W$ and $\Omega$, one can say that
$\pi_t=\varphi_t^{-1}(\pi)$. Let $\gt q_t$ stand for the Lie algebra 
correposnding to $\pi_t$. Then the Poisson centre of 
$\cS(\gt q_t)$ %%% with respect to the $t$-bracket 
equals $\varphi_t^{-1}(Z\cS(\gt q))$. 

For $H\in{\mathcal S}(\gt q)$, let $\deg_t H$ be the degree in $t$ of 
$\varphi_t(H)$. This means that  
$\varphi_t(H)=t^dH_d+t^{d-1}H_{d-1}+\ldots+H_0$, where $d=\deg_t H$,
$H_i\in\cS(\gt q)$, and $H_d\ne 0$. We will say that $H^\bullet:=H_d$ 
is the highest ($t$-) component of $H$.

\begin{lm}\label{bullet}
If $H\in Z\cS(\gt q)$, then $H^\bullet$ is a central element in $\cS(\tilde{\gt q})$.
\end{lm}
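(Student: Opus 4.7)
The plan is to view $\{\,,\,\}_t$ as a $\mK[t]$-linear Poisson bracket on $\cS(\gt q)\otimes\mK[t]$ and then specialise at $t=0$. The key observations, all immediate from the setup in Section~\ref{sec:contr}, are:
(i) the structure constants $\{x,y\}_t$ for $x,y\in\gt q$ are polynomials in $t$, so $\{\,,\,\}_t$ extends $\mK[t]$-linearly to the polynomial ring $\cS(\gt q)\otimes\mK[t]$, and its specialisation at $t=0$ is exactly the Poisson bracket of $\tilde{\gt q}$;
(ii) for $t\in\mK^\times$, the map $\varphi_t$ is an isomorphism of Poisson algebras $(\cS(\gt q),\{\,,\,\}_t)\to(\cS(\gt q),\{\,,\,\})$, since the defining formula $\{x,y\}_t=\varphi_t^{-1}\bigl(\{\varphi_t(x),\varphi_t(y)\}\bigr)$ says precisely that. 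In particular $\varphi_t$ preserves Poisson centres.

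Now take $H\in Z\cS(\gt q)$ and set $d=\deg_t H$. By (ii) the element $\varphi_t^{-1}(H)$ is central for $\{\,,\,\}_t$. Writing $\varphi_t(H)=t^d H^\bullet + t^{d-1}H_{d-1}+\ldots+H_0$ (so that $H^\bullet$ is the component of $H$ of highest $V$-degree $d$), we have
$$
\varphi_t^{-1}(H)\;=\;t^{-d}H^\bullet+t^{-(d-1)}H_{d-1}+\ldots+H_0,
$$
which has negative powers of $t$. To clear denominators, multiply through by $t^d$ and set
$$
\widetilde{H}(t)\;:=\;t^d\,\varphi_t^{-1}(H)\;=\;H^\bullet+tH_{d-1}+t^2H_{d-2}+\ldots+t^dH_0\;\in\;\cS(\gt q)\otimes\mK[t].
$$
Since $\{\,,\,\}_t$ is $\mK[t]$-linear, $\widetilde{H}(t)$ is still central: $\{\widetilde{H}(t),G\}_t=t^d\{\varphi_t^{-1}(H),G\}_t=0$ for every $G\in\cS(\gt q)$.

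Finally I specialise at $t=0$. By (i) the equality $\{\widetilde{H}(t),G\}_t=0$ in $\cS(\gt q)\otimes\mK[t]$ yields, upon setting $t=0$, the relation $\{H^\bullet,G\}_0=0$, where $\{\,,\,\}_0$ is the Poisson bracket of $\tilde{\gt q}$. As $G\in\cS(\gt q)$ was arbitrary, $H^\bullet$ lies in $Z\cS(\tilde{\gt q})$. There is no real obstacle here; the only point that needs care is the bookkeeping of negative powers of $t$ in $\varphi_t^{-1}$, which is why one multiplies by $t^{\deg_t H}$ before taking the limit.
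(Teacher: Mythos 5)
Your proof is correct and follows essentially the same route as the paper: observe that $\varphi_t^{-1}(H)$ is central for $\{\,,\,\}_t$, clear the negative powers of $t$ by multiplying by $t^{\deg_t H}$, and then pass to $t=0$ to conclude that the leading component $H^\bullet$ is central for the contracted bracket. Your reformulation of the limit as specialisation at $t=0$ of a $\mK[t]$-linear bracket on $\cS(\gt q)\otimes\mK[t]$ is just a more explicit phrasing of the paper's ``passing to the limit'' step.
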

\begin{proof}
Since $H\in\cS(\gt q)^{\gt q}$, 
its preimage $\varphi_t^{-1}(H)$ is a central element in $\cS(\gt q_t)$, which
one can write as $\varphi_t^{-1}(H)=t^{-d}H_d+t^{1-d}H_{d-1}+\ldots t^{-1}H_1+H_0$. 
Multiplying it by $t^d$, we get that 
$\sum\limits_{j=0}^{d} t^{d-j}H_j$ is also a central element in $\cS(\gt q_t)$. 
Passing to the limit at $t\to 0$, one obtains 
that $H_d=H^\bullet$ is an element of $Z \cS(\tilde{\gt q})$.
\end{proof}

The automorphism $\varphi_t: \gt q\to \gt q$ does not need to be of degree $1$ in $t$
as well as the Poisson tensor $\pi$ does not need to be linear. 
We can consider a one-parameter family of linear automorphisms of $\An$ and the corresponding deformation of Poisson structures on it. The only important thing as that there exists a limit $\lim_{t\to 0}\pi_t$.
In order to be consistent with the Lie algebra case, 
we identify $\An$ with $\mK^n$. Let $\varphi$ be a $\mK$-linear automorphism of the dual space 
$(\mK^n)^*$. Then $\varphi$ extends to $\mK$-linear automorphisms 
of $\An$, $\ca=\mK[\An]$, $W$, and $\Omega$. 

\begin{df}\label{contr-pol}
Let $\pi$ be a polynomial Poisson tensor on $\An\cong\mK^n$.
Suppose that we have a family of automorphisms $\varphi_t$ 
given by  a regular map $\mK^{^\times}\to {\rm GL}((\mK^n)^*)$ and that 
the formal expression of $\pi_t:=\varphi_t^{-1}(\pi)$ is an element of  $W^2[t]$.
Then
$\tilde\pi:=\lim_{t\to 0}\pi_t$ is called a {\it contraction} of $\pi$.  
For each  $H\in\ca$, we define its {\it highest ($t$-) component} 
as a non-zero polynomial $H^\bullet$ such that 
$H^\bullet =\lim_{t\to 0} t^d\varphi_t^{-1}(H)$ for some $d=:\deg_t H$.  
(One readily sees the uniqueness of this $d$.) 
\end{df}

\begin{lm}\label{bullet-gen}
If $\tilde\pi$ is a contraction of $\pi$, then $\tilde\pi$ is  again a Poisson tensor and 
for each 
$H\in Z(\ca,\pi)$, the polynomial $H^\bullet$ is an 
element of $Z(\ca,\tilde\pi)$. 
\end{lm}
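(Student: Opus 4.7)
My plan is to mirror the proof of Lemma~\ref{bullet}, exploiting the fact that everything in sight is a polynomial (or Laurent polynomial) in the deformation parameter $t$: a polynomial in $t$ that vanishes on $\mK^{\times}$ vanishes identically, hence also at $t=0$. The proof splits naturally into two parts.

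First I would check that $\tilde\pi$ is itself a Poisson tensor. For each $t \in \mK^{\times}$, $\varphi_t$ is an automorphism of $\ca$, so $\pi_t = \varphi_t^{-1}(\pi)$ is a Poisson tensor; equivalently, its Schouten--Nijenhuis self-bracket $[\pi_t,\pi_t]\in W^3$ vanishes. The standing hypothesis $\pi_t \in W^2[t]$ makes $[\pi_t,\pi_t]$ a polynomial in $t$ with coefficients in $W^3$, vanishing on $\mK^{\times}$ and therefore identically. Evaluating at $t=0$ gives $[\tilde\pi,\tilde\pi]=0$, which is the Jacobi identity for $\tilde\pi$; skew-symmetry is automatic since $\tilde\pi \in W^2$.

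Next I would establish the centrality of $H^\bullet$. Fix $H \in Z(\ca,\pi)$ and set $d:=\deg_t H$; Definition~\ref{contr-pol} guarantees that $g_t := t^d \varphi_t^{-1}(H)$ has a non-zero limit $g_0 = H^\bullet$ at $t=0$. A priori, $\varphi_t^{-1}(H)$ lies in the Laurent ring $\ca[t,t^{-1}]$, and $d$ is precisely the exponent needed to clear its pole at $0$, so in fact $g_t \in \ca[t]$ and $dg_t \in \Omega^1[t]$. For every $f \in \ca$ and every $t \in \mK^{\times}$, centrality of $H$ under $\pi$ transports through $\varphi_t$ into $\pi_t(d\varphi_t^{-1}(H), df) = 0$; multiplying by $t^d$ yields
\[
\pi_t(dg_t, df) = 0,
\]
an element of $\ca[t]$ that vanishes on $\mK^{\times}$. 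Hence it vanishes at $t=0$ as well, giving $\tilde\pi(dH^\bullet, df) = 0$. Since $f \in \ca$ was arbitrary, $H^\bullet \in Z(\ca,\tilde\pi)$.

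The single delicate point, which I would regard as the main obstacle, is the bookkeeping that isolates $H^\bullet$ from the Laurent expansion of $\varphi_t^{-1}(H)$ and certifies that $g_t$ really lies in $\ca[t]$ rather than in $\ca[t,t^{-1}]$; once Definition~\ref{contr-pol} is unpacked this is built in. With that granted, both halves of the statement reduce to the same mechanism already used for Lemma~\ref{bullet}: a polynomial identity in $t$ that holds on $\mK^{\times}$ is forced to hold at $t=0$.
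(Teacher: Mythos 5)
Your proof is correct and follows essentially the same route as the paper: the Jacobi identity for $\tilde\pi$ is obtained by observing that $[\pi_t,\pi_t]=0$ is a polynomial identity in $t$ that persists at $t=0$, and the centrality of $H^\bullet$ is exactly the argument of Lemma~\ref{bullet} (clear denominators by $t^d$, then pass to the limit), which is what the paper's proof invokes. No changes needed.
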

\begin{proof}
An element $R\in W^2$ is a Poisson tensor if and only if $[R,R]=0$.
(This is a way to state the Jacobi identity.)  Since 
$[\pi_t,\pi_t]=0$ for all non-zero $t$ and $\pi_t\in W^2[t]$, we have
$\pi_t=\tilde\pi+tR$, where $R\in W^2[t]$,
and $0=[\pi_t,\pi_t]=[\tilde\pi,\tilde\pi]+t\tilde R$ with 
$\tilde R\in W^2[t]$. Therefore $[\tilde\pi,\tilde\pi]=0$.

In order to prove the second statement one repeats 
the argument of  Lemma~\ref{bullet}.
\end{proof}

\begin{ex}\label{zm} Suppose we have a decomposition 
$\gt q=V_0{\oplus}V_1{\oplus}\ldots{\oplus}V_{m-1}$, 
where $V_0$ is a subalgebra and in general 
$[V_i,V_j]\subset \bigoplus\limits_{k\le i+j} V_k$. 
Then one can define $\varphi_t:\gt q\to\gt q$ by 
setting $\varphi|_{V_j}=t^j\id$ and consider Lie 
algebra structures $[\,\,,\,]_t$ on $\gt q$. 
Clearly, there is a limit at $t\to 0$ and the resulting Lie 
algebra $\tilde{\gt q}$ has a $\mathbb Z$-grading with 
at most $m$ non-zero components. 
\end{ex}  
 
Contractions of Lie algebras as in Example~\ref{zm} were studied by 
Panyushev \cite{Dima08}.
 
%%%%%
%% In general, if we have a limit of 
%%% Lie algebra structures 
%% $[\,\,,\,]_t\rightsquigarrow [\,\,,\,]_{\tilde{\gt q}}$
%%% on a vector space $\gt q$. Then 
%%% for each homogeneous symmetric invariant 
%%% $H\in{\mathcal S}(\gt q)$ we let $H^\bullet$ 
%%% denote its "highest" component in $t$. Then $\mK H^\bullet$
%%% coincides with $\lim_{t\to 0} \mK H_t$. {\bf check for the definition of the 
%%% highest $t$-component!}

\begin{df}\label{good-gs}
Let $\ell=\ind\gt q$.
We say that a set 
$\{H_1,\ldots,H_\ell\}\subset {\mathcal S}(\gt q)^{\gt q}$ 
is a {\it good generating system} with respect to
a contraction $[\,\,,\,]_t\ard [\,\,,\,]_{\tilde{\gt q}}$
if the polynomials $H_i$ generate ${\mathcal S}(\gt q)^{\gt q}$ 
and their highest components $H_i^\bullet$ are algebraically independent.
%
%
%%%generate  ${\mathcal S}(\tilde{\gt q})^{\tilde{\gt q}}$. 
\end{df}

Let $(Z{\cS}(\gt q))^\bullet$ be the algebra of highest components 
of $Z\cS(\gt q)$, i.e., this is an algebra generated by $H^\bullet$
with $H\in Z\cS(\gt q)$.

\begin{lm}\label{h-gen}
If $\{H_1,\ldots,H_\ell\}\subset {\mathcal S}(\gt q)^{\gt q}$ 
is a  good generating system, then $H_i^\bullet$ generate 
$(Z{\cS}(\gt q))^\bullet$.
\end{lm}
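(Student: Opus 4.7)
The plan is to fix an arbitrary $F\in Z\cS(\gt q)$ and show $F^\bullet\in\mK[H_1^\bullet,\ldots,H_\ell^\bullet]$; the reverse inclusion $H_i^\bullet\in(Z\cS(\gt q))^\bullet$ is immediate from $H_i\in Z\cS(\gt q)$, so this suffices. Because $H_1,\ldots,H_\ell$ generate $\cS(\gt q)^{\gt q}=Z\cS(\gt q)$, I would write $F=\sum_\alpha c_\alpha H^\alpha$ with $c_\alpha\in\mK$ and finitely many non-zero terms, indexed by multi-indices $\alpha=(\alpha_1,\ldots,\alpha_\ell)$.

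Since $\varphi_t$ is an algebra automorphism of $\cS(\gt q)$, one has $\varphi_t(F)=\sum_\alpha c_\alpha\prod_i\varphi_t(H_i)^{\alpha_i}$. Setting $d_i=\deg_t H_i$, the definition of $H_i^\bullet$ gives $\varphi_t(H_i)=t^{d_i}H_i^\bullet+(\text{terms of strictly lower degree in }t)$, so the contribution of the single monomial indexed by $\alpha$ has leading $t$-coefficient $t^{\langle\alpha,d\rangle}(H^\bullet)^\alpha$, where $\langle\alpha,d\rangle=\sum_i\alpha_id_i$ and $(H^\bullet)^\alpha=\prod_i(H_i^\bullet)^{\alpha_i}$. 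Let $D=\max\{\langle\alpha,d\rangle:c_\alpha\ne 0\}$ and $A=\{\alpha:c_\alpha\ne 0,\ \langle\alpha,d\rangle=D\}$. Collecting the $t^D$ terms yields that the coefficient of $t^D$ in $\varphi_t(F)$ equals $\sum_{\alpha\in A}c_\alpha(H^\bullet)^\alpha$, which is a polynomial in $H_1^\bullet,\ldots,H_\ell^\bullet$.

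The main obstacle is the a priori possibility that this $t^D$-coefficient cancels to zero, in which case $\deg_t F<D$ and the above expression is not $F^\bullet$. However, vanishing of the coefficient yields the identity $\sum_{\alpha\in A}c_\alpha(H^\bullet)^\alpha=0$ in $\cS(\gt q)$, and the polynomial $\sum_{\alpha\in A}c_\alpha y^\alpha\in\mK[y_1,\ldots,y_\ell]$ is non-zero because $A\ne\varnothing$, its defining monomials $y^\alpha$ are pairwise distinct, and each $c_\alpha$ with $\alpha\in A$ is non-zero by construction. This would be a non-trivial algebraic relation among $H_1^\bullet,\ldots,H_\ell^\bullet$, contradicting the algebraic independence assumption in the definition of a good generating system. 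Hence the $t^D$-coefficient is non-zero, $\deg_tF=D$, and $F^\bullet=\sum_{\alpha\in A}c_\alpha(H^\bullet)^\alpha\in\mK[H_1^\bullet,\ldots,H_\ell^\bullet]$, which completes the plan.
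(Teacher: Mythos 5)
Your argument is correct and is essentially the paper's own proof: you express $F$ as a polynomial in the $H_i$, isolate the monomials maximising $\sum_i\alpha_i\deg_tH_i$, and use the algebraic independence of the $H_i^\bullet$ to rule out cancellation of the leading $t$-coefficient, exactly as in the paper (where your $\sum_{\alpha\in A}c_\alpha y^\alpha$ is the polynomial $\tilde P$). The only difference is that you spell out the cancellation issue explicitly, which the paper handles in one sentence.
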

\begin{proof}
Each non-zero element $g\in Z\cS(\gt q)$ can be expressed as a polynomial
$P$ in $H_i$. Suppose that $P$ is a sum 
$P=\sum_{\bar s} a_{\bar s} H_1^{s_1}\ldots H_\ell^{s_\ell}$ over 
some $\bar s\in \Z_{\ge 0}^\ell$. Define $\tilde P$ as a sum 
of those monomials (with the coefficients $a_{\bar s}$), where the degree in $t$,
$s_1\deg_t H_1+\ldots s_\ell\deg_t H_\ell$, is maximal. 
Then $\tilde P(H_1^\bullet,\ldots,H_\ell^\bullet)$ is a non-zero polynomial,  
because the elements $H_i^\bullet$ are algebraically independent, and it 
equals $g^\bullet$ by the construction.
\end{proof}

\smallskip 

\subsection{Contractions and the Kostant equality} 

\begin{ex}\label{sl2-contr}
Suppose that $\gt q=\gt{sl}_2$ and a contraction $\gt q\ard\tilde{\gt q}$
is defined by a 
%%%$\Z_2$-contraction of $\gt{sl}_2$. 
decomposition $\gt{sl}_2=\gt{so}_2{\oplus} V$, where 
$V$ is an $\gt{so}_2$-invariant complement. %%%, and the corresponding 
%%%linear contraction.
In a standard basis $\{e,h,f\}$ the automorphism $\varphi_t$
multiplies $e$ and $f$ by $t$. In the basis
$\{e/t,h,f/t\}$ the Poisson tensor $\pi_t$ is given by the same formula
as $\pi$ in the original basis. Therefore we have 
$$
\frac{dF}{d(e/t)\wedge dh\wedge d(f/t)}=h\partial_{e/t}{\wedge}\partial_{f/t}+
  2\frac{e}{t}\partial_h{\wedge}\partial_{e/t}+2\frac{f}{t}\partial_{f/t}{\wedge}\partial_h,
$$
where $F$ is a suitably normalised invariant of degree $2$,
explicitly $F=-\frac{h^2}{2}-\frac{2ef}{t^2}$.
After removing $t$ from denominators, the  above equality modifies to
$$
\frac{-t^2hdh-2fde-2edf}{de\wedge dh\wedge df}=t^2h\partial_{e}{\wedge}\partial_{f}+
  2{e}\partial_h{\wedge}\partial_{e}+2{f}\partial_{f}{\wedge}\partial_h.
$$
In particular, for $\tilde{\gt q}$, we have $dF^\bullet/\omega=\tilde\pi$.
\end{ex}

Example~\ref{sl2-contr} illustrates a general phenomenon. 
Let $D_t$ be the degree in $t$ of the determinant
of the map $\varphi_t:(\mK^n)^*\to(\mK^n)^*$, where 
$\mK^n$ is identified with $\An$. In case of  a linear (in $t$) 
contraction of a Lie algebra $\gt q$, we have $D_t=\dim V$ and
$\varphi_t$ multiplies the canonical volume form 
$\omega$ by $t^{D_t}$.

\begin{thm}\label{contr-deg}
Suppose we have a contraction $\pi_t\ard \tilde\pi$ of
a Poisson structure $\pi$ on $\An\cong\mK^n$ given by a family of linear 
automorphisms $\varphi_t:(\mK^n)^*\to(\mK^n)^*$ with the determinants
$t^{D_t}$. 
Suppose further that $\ind\ca=\ell$ and it stays the same under
the contraction.  If the Kostant equality holds for a set of polynomials 
$F_1,\ldots,F_\ell\in Z(\ca,\pi)$,
then 
\begin{itemize}
\item[({\sf i})] \  $\sum\deg_t F_i\ge D_t$, moreover, if $\sum\deg_t F_i > D_t$, then 
$F_i^\bullet$ are algebraically dependent;
\item[({\sf ii})] \ if $\sum\deg_t F_i =D_t$, then $F_i^\bullet$ are algebraically independent
and satisfy the Kostant equality with $\tilde\pi$;
\item[({\sf iii})] \ if we have an equality in ({\sf i}),  
$\dim\mathrm{Sing}\,\tilde\pi\le n-2$, and each $F_i^\bullet$ 
is a homogeneous polynomial, 
then $F_i^\bullet$ generate $Z(\ca,\tilde\pi)$.
\end{itemize}
\end{thm}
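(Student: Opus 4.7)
The plan is to apply $\varphi_t^{-1}$ to the Kostant equality $(dF_1\wedge\cdots\wedge dF_\ell)/\omega=\Lambda^{(n-\ell)/2}\pi$ and read off information by comparing powers of $t$. Since $\det\varphi_t=t^{D_t}$ gives $\varphi_t(\omega)=t^{D_t}\omega$, and $\varphi_t^{-1}$ commutes with $d$ and $\wedge$, the Kostant identity transforms into the master relation
\begin{equation*}
\Lambda^{(n-\ell)/2}\pi_t \;=\; t^{D_t}\cdot\frac{d\varphi_t^{-1}(F_1)\wedge\cdots\wedge d\varphi_t^{-1}(F_\ell)}{\omega}. \qquad(\star)
\end{equation*}
Writing $\varphi_t^{-1}(F_i)=t^{-d_i}F_i^\bullet+(\text{terms of higher $t$-degree})$ with $d_i:=\deg_t F_i$, the right-hand side of $(\star)$ is a Laurent polynomial in $t$ whose lowest-order term is $t^{D_t-\sum d_i}\cdot(dF_1^\bullet\wedge\cdots\wedge dF_\ell^\bullet)/\omega$, while the left-hand side is a polynomial in $t$ whose value at $t=0$ is $\Lambda^{(n-\ell)/2}\tilde\pi$.

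For ({\sf i}), the hypothesis $\ind(\ca,\tilde\pi)=\ell$ forces $\Lambda^{(n-\ell)/2}\tilde\pi\ne 0$. If $\sum d_i<D_t$, setting $t=0$ in $(\star)$ would give $\Lambda^{(n-\ell)/2}\tilde\pi=0$, a contradiction; hence $\sum d_i\ge D_t$. If instead $\sum d_i>D_t$, the right-hand side of $(\star)$ carries a strictly negative $t$-power, and since the left-hand side has no negative powers the corresponding coefficient $(dF_1^\bullet\wedge\cdots\wedge dF_\ell^\bullet)/\omega$ must vanish, establishing the algebraic dependence of the $F_i^\bullet$. For ({\sf ii}), the equality $\sum d_i=D_t$ makes the constant term in $t$ of $(\star)$ read $(dF_1^\bullet\wedge\cdots\wedge dF_\ell^\bullet)/\omega=\Lambda^{(n-\ell)/2}\tilde\pi$, which is exactly the Kostant equality for the $F_i^\bullet$ with respect to $\tilde\pi$; its right-hand side being nonzero, the $F_i^\bullet$ are algebraically independent.

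For ({\sf iii}), assume the equality in ({\sf i}), so by ({\sf ii}) the Kostant equality holds for the $F_i^\bullet$ and $\cj(F_1^\bullet,\ldots,F_\ell^\bullet)=\mathrm{Sing}\,\tilde\pi$ has codimension $\ge 2$. Let $H\in Z(\ca,\tilde\pi)$ be homogeneous. Since $\mathrm{tr.deg}\,Z(\ca,\tilde\pi)\le\ell$, the forms $dH, dF_1^\bullet,\ldots,dF_\ell^\bullet$ are $\mK(\An)$-linearly dependent, and Cramer's rule expresses the coefficients $a_i\in\mK(\An)$ in $dH=\sum a_i\,dF_i^\bullet$ as ratios of polynomial $\ell$-forms whose denominator $(dF_1^\bullet\wedge\cdots\wedge dF_\ell^\bullet)/\omega$ vanishes only on the codim-$2$ subset $\mathrm{Sing}\,\tilde\pi$; by normality of $\An$ (Hartogs), each $a_i$ extends to a regular polynomial. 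Exterior-differentiating $dH=\sum a_i\,dF_i^\bullet$ yields $\sum da_i\wedge dF_i^\bullet=0$ in $\Omega^2$, and decomposing this identity along a splitting $\Omega^1\otimes\mK(\An)=\mathrm{span}(dF_j^\bullet)\oplus W$ forces each $da_i$ into $\mathrm{span}(dF_j^\bullet)$; hence $\{a_i,g\}=\tilde\pi(da_i,dg)$ vanishes for every $g\in\ca$, placing $a_i\in Z(\ca,\tilde\pi)$. Since $\deg a_i=\deg H-\deg F_i^\bullet<\deg H$, induction on $\deg H$ (base case $\deg H=0$ trivial) gives $a_i\in\mK[F_1^\bullet,\ldots,F_\ell^\bullet]$, and Euler's identity
\[
\deg(H)\cdot H=\sum_k x_k\,\partial_k H=\sum_i \deg(F_i^\bullet)\,a_i\, F_i^\bullet
\]
places $H$ there as well.

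The main obstacle is ({\sf iii}): one must deploy the codim-$2$ hypothesis twice --- first via Hartogs to upgrade the Cramer coefficients $a_i$ to elements of $\ca$, and then through the exterior-algebra identity $\sum da_i\wedge dF_i^\bullet=0$ to deduce their Poisson-centrality --- before the induction on $\deg H$ can close. The first two parts are essentially a bookkeeping exercise in $t$, but ({\sf iii}) is where the geometric content resides.
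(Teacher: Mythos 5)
Your parts ({\sf i}) and ({\sf ii}) follow the paper's proof essentially verbatim: the same master identity obtained by applying $\varphi_t^{-1}$ to the Kostant equality and clearing $\varphi_t^{-1}(\omega)=t^{-D_t}\omega$, and the same comparison of $t$-powers on the two sides (your observation that the coefficient of the lowest power $t^{D_t-\sum d_i}$ is exactly $(dF_1^\bullet\wedge\cdots\wedge dF_\ell^\bullet)/\omega$ is, if anything, a cleaner way to phrase the paper's ``zero or tends to infinity'' step). The divergence is in ({\sf iii}). The paper passes from the codimension bound on $\cj(F_1^\bullet,\ldots,F_\ell^\bullet)=\mathrm{Sing}\,\tilde\pi$ to the generation statement by citing the characteristic-zero version of Skryabin's theorem, \cite[Theorem~1.1]{ppy}, as a black box; you instead reprove that input: Cramer's rule plus algebraic Hartogs to obtain polynomial coefficients $a_i$ in $dH=\sum a_i\,dF_i^\bullet$, Cartan's lemma on $\sum da_i\wedge dF_i^\bullet=0$ to place $a_i$ in $Z(\ca,\tilde\pi)$, then Euler's identity and induction on degree. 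This is sound modulo one imprecision --- the ``denominator'' in Cramer's rule is a particular maximal minor of the Jacobian matrix, which can vanish on a divisor even where the full form $dF_1^\bullet\wedge\cdots\wedge dF_\ell^\bullet$ does not; what one actually uses is that the $a_i$ are independent of the choice of nonvanishing minor, hence regular off $\cj(F_1^\bullet,\ldots,F_\ell^\bullet)$, hence polynomial by normality of $\An$. What your route buys is self-containedness; what it loses is generality: you only treat \emph{homogeneous} $H\in Z(\ca,\tilde\pi)$, so your proof of ({\sf iii}) is complete only when $Z(\ca,\tilde\pi)$ is spanned by its homogeneous elements (e.g.\ when $\tilde\pi$ itself is homogeneous, as in all the Lie-algebra contractions considered in this paper), whereas the hypothesis of ({\sf iii}) only requires the $F_i^\bullet$ to be homogeneous and the cited theorem handles an arbitrary $F$ algebraic over $\mK[F_1^\bullet,\ldots,F_\ell^\bullet]$. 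You should either add the assumption that $\tilde\pi$ is homogeneous, or supply the reduction from general central elements to homogeneous ones.
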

\begin{proof}
We are contracting, so to say, both sides in the Kostant equality.
For each non-zero $t$, we have
$$
\frac{d\varphi_t^{-1}(F_1)\wedge\ldots\wedge d\varphi_t^{-1}(F_\ell)}{\varphi_t^{-1}(\omega)}=\Lambda^{(n-\ell)/2}\pi_t
$$
and therefore
$$
\frac{t^{D_t}d\varphi_t^{-1}(F_1)\wedge\ldots\wedge d\varphi_t^{-1}(F_\ell)}{\omega}=\Lambda^{(n-\ell)/2}\tilde\pi+tR\,,
$$
where $R\in W^{n-\ell}[t]$.

If $\sum\deg_t F_i < D_t$, then taking the limit at $t\to 0$, we get zero 
on the left hand side. Since index remains the same under this contration, 
$\Lambda^{(n-\ell)/2}\tilde\pi\ne 0$, %%%, the right hand side is non-zero 
%%%  at $t=0$ 
and this proves the inequality $\sum\deg_t F_i\ge D_t$.  Further, if  
$\sum\deg_t F_i > D_t$, then 
$t^{D_t}(dF_1^\bullet\wedge\ldots\wedge dF_\ell^\bullet)$ is either zero or tends to 
infinity as $t$ tends to zero and therefore 
$dF_1^\bullet\wedge\ldots\wedge dF_\ell^\bullet$ must be zero.
This completes the proof of part ({\sf i}).

The equality $\sum\deg_t F_i = D_t$ implies that the left hand side tends 
to $(dF_1^\bullet\wedge\ldots\wedge dF_\ell^\bullet)/\omega$ as $t$ tends 
to zero. Therefore these highest components are algebraically independent 
and indeed satisfy the Kostant equality with $\Lambda^{(n-\ell)/2}\tilde\pi$
on the right hand side.

Part ({\sf ii}) implies that $\cj(F_1^\bullet,\ldots,F_\ell^\bullet)$ 
is equal to $\mathrm{Sing}\,\tilde\pi$. Thus, if the conditions in ({\sf iii}) 
are satisfied, then the Jacobian locus of $F_i^\bullet$ has dimension at most
$n-2$. Since $\mathrm{tr.\,deg}\,Z(\ca,\tilde\pi)\le\ell$, for each 
$F\in Z(\ca,\tilde\pi)$, the polynomials $F_1^\bullet,\ldots,F_\ell^\bullet$, and $F$ are algebraically dependent. The assumption that each $F_i^\bullet$ is homogeneous, allows us to use a characteristic zero version of  Skryabin's result,
see \cite[Theorem~1.1]{ppy}, which states that in this situation $F$ lies in the subalgebra generated by $F_i^\bullet$.
\end{proof}

\section{Symmetric invariants of $\Z_2$-contractions}\label{sec:symm}

Let $G$ be a connected reductive algebraic group defined over 
$\mK$. Suppose that $\mK=\overline{\mK}$. Set $\gt g=\Lie G$. 
Let $\sigma$ be an involution (automorphism of oder $2$) 
of $G$.  On the Lie algebra level 
$\sigma$ induces a $\mathbb Z_2$-grading 
$\gt g=\gt g_0\oplus\gt g_1$, where 
$\gt g_0=\gt g^{\sigma}=\Lie G_0$ and 
$G_0:=G^{\sigma}$ is the subgroup of $\sigma$-invariant points. 
In this context, $G_0$ is said to be 
a {\it symmetric subgroup}, $G/G_0$ a {\it symmetric space} and 
$(\gt g,\gt g_0)$ a {\it symmetric pair}.
One can contract $\gt g$ to a semidirect product
$\g=\gt g_0\ltimes\gt g_1$, where $\gt g_1$ becomes an Abelian ideal,
in the same way as described in Section~\ref{sec:contr}. 
We will call the resulting Lie algebra, $\g$, a  {\it $\mathbb Z_2$-contraction}
of $\gt g$. In this section, our main objects of interest are $\mathbb Z_2$-contractions of simple (non-Abelian) Lie  algebras. 

Set $\ell=\ind\gt g=\rk\gt g$.
By \cite[Proposition 2.5]{Dima06}, $\ind\g=\ell$ for a
$\Z_2$-contraction of a reductive Lie algebra. It was also conjectured 
in \cite{Dima06} that $\cS(\g)^{\g}$ is a polynomial algebra 
in $\ell$ variables. In would be sufficient to prove the conjecture 
for symmetric pairs with simple $\gt g$. For many pairs 
it was already proved in \cite{Dima06}. Here we consider 4 of the remaining ones. This does not cover all them and does not prove Panyushev's conjecture.

\begin{prop}[{\cite[Section~6]{Dima06}}]\label{remaining}
Suppose that $\gt g$ is a simple non-Abelian Lie algebra.
Then 
all  symmetric pairs $(\gt g,\gt g_0)$ such that the polynomiality 
of ${\mathcal S}(\tilde{\gt g})^{\tilde{\gt g}}$
is not established yet are listed below. 

\vskip0.3ex
\noindent
{\bf \underline{Exceptional Lie algebras}:} 
\begin{itemize} 
\item[$\bullet$] \
$(E_6, F_4)$, $(E_7,E_6{\oplus}\mK)$,  $(E_8,E_7{\oplus}\gt{sl}_2)$, and
$(E_6,\gt{so}_{10}{\oplus}\gt{so}_2)$;  
\item[$\bullet$] \ $(E_7, \gt{so}_{12}{\oplus}\gt{sl}_2)$. 
\end{itemize}

\vskip0.3ex
\noindent
{\bf \underline{Classical Lie algebras}:}
\begin{itemize}
\item[$\bullet$] $(\gt{sp}_{2n+2m},\gt{sp}_{2n}{\oplus}\gt{sp}_{2m})$ for 
$n\ge m$;
\item[$\bullet$] $(\gt{so}_{2\ell},\gt{gl}_\ell)$; 
\item[$\bullet$] $(\gt{sl}_{2n},\gt{sp}_{2n})$.
\end{itemize}
\end{prop}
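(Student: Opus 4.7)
The plan is to proceed by classification-and-bookkeeping. The proposition is not a genuinely new result but a transcription of what is, and what is not, covered by Panyushev's case analysis in \cite[Section~6]{Dima06}. Accordingly, the proof reduces to (i) invoking the \'E.~Cartan classification of involutive automorphisms of the simple Lie algebras $\gt g$, and (ii) cross-referencing the resulting list of symmetric pairs $(\gt g,\gt g_0)$ against that analysis.

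First I would recall the classification, which gives the familiar short list of classical symmetric pairs (several infinite families) together with a finite list of exceptional symmetric pairs (twelve in all, up to duality). Then, for each pair on this list, I would recall Panyushev's strategy in the contraction setting: one exhibits generators $F_1,\ldots,F_\ell$ of $\cS(\gt g)^{\gt g}$ whose highest $t$-components satisfy the equality $\sum\deg_t F_i=\dim\gt g_1$. By Theorem~\ref{contr-deg}, combined with the ``codim-$2$'' property of $\g$ established in \cite{Dima06}, this equality forces $\cS(\g)^{\g}$ to be freely generated by $F_1^\bullet,\ldots,F_\ell^\bullet$, so that the construction produces a good generating system in the sense of Definition~\ref{good-gs}. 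Such a construction is carried out in \cite[Section~6]{Dima06} whenever the Chevalley-type restriction homomorphism $\mK[\gt g]^{\gt g}\to\mK[\gt g_1]^{\gt g_0}$ is sufficiently well-understood to supply candidate generators whose $t$-degrees can be controlled.

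The proof then amounts to verifying that the symmetric pairs \emph{not} handled by this strategy in \cite{Dima06} are precisely those in the statement. For classical $\gt g$ one inspects Panyushev's case-by-case tables and identifies the three residual families $(\gt{sp}_{2n+2m},\gt{sp}_{2n}{\oplus}\gt{sp}_{2m})$, $(\gt{so}_{2\ell},\gt{gl}_\ell)$, and $(\gt{sl}_{2n},\gt{sp}_{2n})$ as the only ones for which a good generating system was not produced. For the exceptional algebras one runs through the finite list of involutions on $E_6$, $E_7$, $E_8$, $F_4$, and $G_2$ and checks that precisely the five pairs listed are not settled in \cite[Section~6]{Dima06}.

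The only real obstacle is organisational rather than mathematical: one must be careful to match Panyushev's conventions, account for duality between symmetric pairs, and guard against low-rank coincidences such as $\gt{so}_6\cong\gt{sl}_4$ or $\gt{so}_5\cong\gt{sp}_4$, under which a single symmetric pair can appear under several guises and risk being double-counted or overlooked. Once the matching against the tables of \cite{Dima06} is done faithfully, no further argument is required.
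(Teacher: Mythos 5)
Your proposal is correct and matches the paper's treatment: the paper gives no argument for this proposition at all, simply quoting it from \cite[Section~6]{Dima06}, and the classification-plus-cross-referencing bookkeeping you describe is exactly what that citation amounts to. No further comparison is needed.
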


The first 4 exceptional symmetric pairs are collected in one item, because 
there are no good generating systems in ${\mathcal S}(\gt g)^{\gt g}$ 
with respect to the corresponding $\Z_2$-contractions, see \cite[Remark~4.3]{Dima06}. Moreover, it is quite possible that 
the algebra of symmetric invariants is not freely generated for these $\tilde{\gt g}$.
These are precisely the symmetric pairs such that the restriction homomorphism 
$\mK[\gt g]^G\to \mK[\gt g_1]^{G_0}$ is not surjective \cite{Helg}.

According to  \cite[Theorem~3.3.]{Dima06}, the Lie algebra $\g$ always possesses the ``codim-2" property,
$\dim\mathrm{Sing}\,\tilde\pi\le \dim\g-2$. For the pair $(E_7, \gt{so}_{12}{\oplus}\gt{sl}_2)$ and the three classical series 
listed in Proposition~\ref{remaining}, we will construct homogeneous generators 
$F_i\in\cS(\gt g)^{\gt g}$ such that $\sum\deg_tF_i\le\dim\gt g_1$ and using Theorem~\ref{contr-deg} prove that 
Panyushev's conjecture holds for them. 

%For the other 4 pairs (one exceptional and 3 classical series) we  first exhibit good generating systems consisting of homogeneous $\gt g$-invariants. this would imply Panyushev's conjecture for them. 

For each element $x\in\gt g_1$, we let $\gt g_{i,x}=\gt g_x\cap\gt g_i$ denote
its centraliser in $\gt g_i$ ($i=0,1$).
Let $\mathfrak{c}\subset\gt g_1$ be a maximal (Abelian) subalgebra
consisting of semisimple elements. Any such subalgebra is
called a {\it Cartan subspace} of $\gt g_1$. 
Let $\gt l=\gt g_{0,\gt c}$ be the centraliser of $\gt c$ in $\gt g_0$. 
We will need a few facts that 
can be found in e.g.  \cite[Thm.\,1\&Prop.\,8]{kr}.
First, all Cartan subspaces are $G_0$-conjugate. Second, 
$\gt l=\gt g_{0,s}$ for a generic $s\in\gt c$ and therefore it is 
a reductive subalgebra. And finally, 
$G_0\gt c$ is a dense subset of $\gt g_1$. 

Let  $L:=(G_{0,\gt c})^{\circ}$ be the connected component of the identity 
of $G_{0,\gt c}=\{g\in G_0\mid gs=s \text{ for all } s\in\gt c\}$. 
Using the Killing form, we identify
$\gt g\cong\gt g^*$,
$\gt g_1\cong\gt g_1^*$, and $\gt g_0\cong\gt g_0^*$.  Fix also the dual decomposition 
$\g^*=\gt g_0^*{\oplus}\gt g_1^*$. 
In order to avoid confusion,  let $\hat{\gt l}$ and $\hat{\gt c}$ denote 
the subspaces of $\g^*$  arising from $\gt l$ and $\gt c$, respectively, under
this identification.  
The orthogonal complements appearing below are taken with respect to the Killing form of $\gt g$. 
 Let $\tilde G=G_0\ltimes\exp(\gt g_1)$ 
be an algebraic group with $\Lie\tilde G=\g$.
The group $G_0$ is not necessary connected and therefore $\tilde G$ can 
also have  several connected components. However, note that each bi-homogeneous with respect to the decomposition $\gt g=\gt g_0{\oplus}\gt g_1$ 
component of $H\in\cS(\gt g)^{\gt g}$ is an invariant of $G_0$ and 
therefore in view of  Lemma~\ref{bullet}, $H^\bullet\in\cS(\g)^{\tilde G}$.  

\begin{rmk}\label{g.g.s.-hom}
In \cite{Dima06}, a good generating system (g.g.s.) consists of homogeneous polynomials 
by the definition. It is possible to show that if there is a g.g.s. in 
$\cS(\gt g)^{\gt g}$ with respect to a contraction $\gt g\ard\g$, then there are also
homogeneous polynomials forming a g.g.s.. We will not use and therefore will not prove 
this fact. In this and the following sections, all generating systems of invariants 
contain only homogeneous polynomials. 
\end{rmk}

\begin{ex}\label{e7-sym}
Take  $(\gt g,\gt g_0)=(E_7, \gt{so}_{12}{\oplus}\gt{sl}_2)$. Then $D_t=\dim\gt g_1=64$, and 
the generating homogeneous invariants 
$H_1,\ldots,H_7\in\cS(\gt g)^{\gt g}$ have degrees: $2,6,8,10,12,14,18$.
It is known that the restrictions of $H_1,H_2,H_3$, and $H_5$ to $\gt g_1^*$
generate $\cS(\gt g_1)^{\gt g_0}$, independently of the choice of 
$H_i$, see \cite{Helg}. We will show that there is a g.g.s.
in $Z\cS(\gt g)$ with respect to the contraction $\gt g\ard\g$.

Take any of the remaining three generators, say $H_j$. Assume that
$H_j^\bullet\in\cS(\gt g_1)$. Then it can be expressed as a polynomial $P$
in $H_i^\bullet$ with $i\in\{1,2,3,5\}$ and we can replace $H_j$ by
$H_j-P(H_1,H_2,H_3,H_5)$. Or rather assume from the beginning that
$\deg_t H_j <\deg H_j$. 

Next step is to show that $\deg_t H_j < \deg H_j-1$. 
Assume this not to be the case.  Restricting $H_j^\bullet$ to
$\hat{\gt l}{\oplus}\hat{\gt c}$, we get either zero or an $L$-invariant polynomial function
of bi-degree $(\deg H_j-1,1)$, in  other words,  a sum of $L$-invariants in
$\cS(\gt l)$ of an odd degree with coefficients from $\gt c$.
%%% {\it mojet ih so zvezdochkami pisat, vo izbejanie putannitsy?}
In this example $\gt l=\gt{sl}_2{\oplus}\gt{sl}_2{\oplus}\gt{sl}_2$
(see e.g. \cite[\S 5.4 and Table~9 in Ref. Chapter]{VO})
and all symmetric invariants have even degrees. 
This shows that $H_j^\bullet$ is zero on $\hat{\gt l}{\oplus}\hat{\gt c}$.
Clearly $H_j^\bullet$ is also zero on the 
$\tilde G$-saturation $\tilde G(\hat{\gt l}{\oplus}\hat{\gt c})$. %%%of this subset. 

Consider first the action of $\exp(\gt g_1)\subset \tilde G$. %% on $\gt l{\oplus}$. 
Note that $[\gt g,x]=\gt g_x^{\perp}$ for any $x\in\gt g$, and hence
$[\gt g_1,x]=\gt g_0\cap\gt g_x^{\perp}=\gt g_0\cap(\gt g_{0,x})^{\perp}$ for any $x\in\gt g_1$.
Now let $\hat s\in\hat{\gt c}$ be an element coming from some $s\in\gt c$. Then  
$$
\exp(\gt g_1)(\hat{\gt l}{\times}\{\hat s\})=\hat{\gt l}{\times}\{\hat s\}+\widehat{[\gt g_1,s]},
$$
where %%$[\gt g_1,s]$ %%=\gt g_{0,s}^*$ is the annihilator  of $\gt g_{0,s}$
%% in $\gt g_0^*$ (or 
%%s the orthogonal complement of $\gt g_{0,s}$ in $\gt g_0$ and thereby
$\widehat{[\gt g_1,s]}$ is the annihilator  of $\gt g_{0,s}$
in $\gt g_0^*$.
Since $\gt g_{0,s}=\gt l$ for generic $s\in\gt c$, the saturation
$\exp(\gt g_1)(\hat{\gt l}{\oplus}\hat{\gt c})$ is a dense subset of $\gt g_{0}^*{\oplus}\hat{\gt c}$.
Applying $G_0$ to this subset, we get that $\overline{\tilde G(\hat{\gt l}{\oplus}\hat{\gt c})}=\g^*$
and therefore $H_j^\bullet=0$. Since the highest $t$-component of a non-zero polynomial  is non-zero, 
we get that $\deg_t H_j\le \deg H_j-2$.
Summing up, 
$$
\sum\limits_{i=1}^{7}\deg_t H_i\le \sum\deg H_i-6=70-6=64=D_t.
$$
Multiplying one of the $H_i$ by  a non-zero constant, we may assume that 
$H_1,\ldots, H_\ell$ satisfy the Kostant equality. 
Then, by Theorem~\ref{contr-deg}({\sf i}),({\sf ii}), $H_i^\bullet$ are algebraically 
independent, which means that  $H_i$ form a good generating system.
\end{ex}

In order to simplify calculations for other pairs, we prove a simple 
equality concerning ranks and dimensions.  
Recall that $\ell=\rk\gt g$.

\begin{lm}\label{rk-dim}
Let $\gt b_{\gt l}\subset\gt l$ be a Borel subalgebra.
Then $\dim\gt b=\dim\gt g_1+\dim\gt b_{\gt l}$.
\end{lm}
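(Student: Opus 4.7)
The plan is to reduce the identity to a well-known dimension count for the symmetric pair by expressing both $\dim\gt b$ and $\dim\gt b_{\gt l}$ via ``dimension plus rank over two'', and then relating $\dim\gt g_1$ to $\dim\gt g_0 - \dim\gt l + \dim\gt c$ via the density of $G_0\gt c$ in $\gt g_1$.

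First I would choose a Cartan subalgebra $\gt t_{\gt l}\subset\gt l$ and observe that $\gt t:=\gt t_{\gt l}\oplus\gt c$ is a Cartan subalgebra of $\gt g$. Indeed, $\gt c$ is commutative, consists of semisimple elements, and commutes with $\gt l = \gt g_{0,\gt c}$ by construction, while any Cartan of $\gt g$ that contains $\gt c$ lies in the reductive subalgebra $\gt g_{\gt c}=\gt l\oplus\gt c$ (its intersection with $\gt l$ being a Cartan of $\gt l$). Consequently
\[
\ell \;=\; \rk\gt g \;=\; \dim\gt c + \rk\gt l.
\]

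Next I would use the standard identity $\dim\gt b = (\dim\gt g + \rk\gt g)/2$ for a Borel subalgebra, and the analogous one $\dim\gt b_{\gt l} = (\dim\gt l + \rk\gt l)/2$ for the reductive Lie algebra $\gt l$. Substituting $\rk\gt l=\ell-\dim\gt c$ gives
\[
\dim\gt b - \dim\gt b_{\gt l} \;=\; \tfrac{1}{2}\bigl(\dim\gt g - \dim\gt l + \dim\gt c\bigr).
\]

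Finally I would invoke the fact recalled just before the lemma: $G_0\gt c$ is dense in $\gt g_1$, and for generic $s\in\gt c$ the stabiliser $\gt g_{0,s}$ equals $\gt l$. Therefore
\[
\dim\gt g_1 \;=\; \dim(G_0\gt c) \;=\; (\dim\gt g_0 - \dim\gt l) + \dim\gt c,
\]
so that $\dim\gt g - \dim\gt l + \dim\gt c = \dim\gt g_0 + \dim\gt g_1 - \dim\gt l + \dim\gt c = 2\dim\gt g_1$. Combining this with the previous display yields $\dim\gt b - \dim\gt b_{\gt l} = \dim\gt g_1$, as claimed.

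There is no real obstacle; the only point requiring care is the verification that $\gt t = \gt t_{\gt l}\oplus\gt c$ is a Cartan subalgebra of $\gt g$ (equivalently, $\rk\gt g = \dim\gt c + \rk\gt l$), which follows from the structure theory of symmetric pairs already used in the paper.
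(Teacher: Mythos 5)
Your proof is correct and follows essentially the same route as the paper's: both rest on the identity $\ell=\rk\gt l+\dim\gt c$ (since $\gt l\oplus\gt c$ contains a maximal torus), the Kostant--Rallis orbit-dimension equality $\dim\gt g_0-\dim\gt l=\dim\gt g_1-\dim\gt c$, and the formula $\dim\gt b=(\dim\gt g+\rk\gt g)/2$. The only cosmetic difference is that you rederive the orbit-dimension equality from the density of $G_0\gt c$ in $\gt g_1$, whereas the paper quotes it directly from Kostant--Rallis.
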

\begin{proof}
Clearly the subspace $\gt l{\oplus}\gt c$ contains a maximal torus of $\gt g$.
Therefore $\ell=\rk\gt l+\dim\gt c$. It is known that the dimension of a maximal $G_0$-orbit in $\gt g_1$
equals  $\dim\gt g_0-\dim\gt l$ on one hand, and $\dim\gt g_1-\dim\gt c$ on the other, 
see e.g. \cite[Proposition~9]{kr}.
%%It is also known that $\gt c$ intersects generic $G_0$-orbits transversely,  and hence  
Consequently, $\dim\gt g_0-\dim\gt l=\dim\gt g_1-\dim\gt c$.
Thereby we have
$$
\begin{array}{l}
\dim \gt b=(\dim\gt g+\ell)/2=(\dim\gt g_0+\dim\gt g_1+\ell)/2=\\
\quad = (\dim\gt g_1+\dim\gt l-\dim\gt c+\dim\gt g_1+\ell)/2=\dim\gt g_1+ \\
\qquad +(\dim\gt l-\dim\gt c+\rk\gt l+\dim\gt c)/2=\dim\gt g_1+\dim\gt b_{\gt l}.
\end{array}
$$
\end{proof}

The following assertion was predicted by D.\,Panyushev. 

\begin{thm}\label{goodpairs}
Let $(\gt g,\gt g_0)$ be a symmetric pair with $\gt g$ simple.
Suppose that the restriction map 
$\mK[\gt g]^G\to \mK[\gt g_1]^{G_0}$ is surjective. Then 
there is a good generating system $F_1,\ldots,F_\ell$ in 
$\cS(\gt g)^{\gt g}$ such that each $F_i$ is homogeneous and  $\cS(\g)^{\g}$ is freely generated by  $F_i^\bullet$.  
\end{thm}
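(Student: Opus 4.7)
The plan is to apply Theorem~\ref{contr-deg} with $D_t=\dim\gt g_1$. For a $\Z_2$-contraction one has $\ind\g=\ell$ and $\dim\mathrm{Sing}\,\tilde\pi\le\dim\g-2$ by \cite[Prop.~2.5 and Thm.~3.3]{Dima06}, so it suffices to construct homogeneous generators $F_1,\ldots,F_\ell$ of $\cS(\gt g)^{\gt g}$ with $\sum_i\deg_t F_i\le\dim\gt g_1$: Theorem~\ref{contr-deg}({\sf i}) then forces equality, ({\sf ii}) gives algebraic independence of the $F_i^\bullet$ together with the Kostant equality for $\tilde\pi$, and ({\sf iii}) yields that they freely generate $\cS(\g)^{\g}$.

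By the Kostant--Rallis theorem, $\mK[\gt g_1]^{G_0}\cong\mK[\gt c]^{W_{\gt c}}$ is polynomial in $r=\dim\gt c$ variables of degrees $e_1,\ldots,e_r$. The surjectivity hypothesis, applied inductively on degree, lets me choose homogeneous generators $F_1,\ldots,F_\ell$ of $\cS(\gt g)^{\gt g}$ such that $F_1,\ldots,F_r$ satisfy $\deg F_j=e_j$ and restrict to a basic system of generators of $\mK[\gt g_1]^{G_0}$, whereas the remaining $F_{r+1},\ldots,F_\ell$, after subtracting appropriate polynomials in $F_1,\ldots,F_r$, satisfy $F_k|_{\gt g_1}=0$. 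This gives $\deg_t F_j=e_j$ for $j\le r$ and $\deg_t F_k\le\deg F_k-1$ for $k>r$. Combined with the classical identity $\sum_i\deg F_i=\dim\gt b$ and Lemma~\ref{rk-dim}, the bound $\sum_i\deg_t F_i\le\dim\gt g_1$ is equivalent to the total-defect estimate $\sum_{k>r}(\deg F_k-\deg_t F_k)\ge\dim\gt b_{\gt l}$.

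To prove the total-defect estimate I would pass to the highest components $F_k^\bullet\in\cS(\g)^{\tilde G}$ (Lemma~\ref{bullet}) and restrict them to $\hat{\gt l}\oplus\hat{\gt c}\subset\g^*$. The density argument from Example~\ref{e7-sym}, based on $\exp(\gt g_1)(\hat{\gt l}\times\{\hat s\})=\hat{\gt l}\times\{\hat s\}+\widehat{[\gt g_1,s]}$ for generic $s\in\gt c$ and yielding $\overline{\tilde G(\hat{\gt l}\oplus\hat{\gt c})}=\g^*$, ensures the restriction $F_k^\bullet|_{\hat{\gt l}\oplus\hat{\gt c}}$ is nonzero. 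Since $L$ centralises $\gt c$, this restriction lies in $\cS(\gt l)^L\otimes\cS(\gt c)$, and Chevalley restriction identifies $\cS(\gt l)^L$ with $\mK[\gt h_L]^{W_L}$, whose basic generators have degrees $f_1,\ldots,f_{\ell-r}$ with $\sum_j f_j=\dim\gt b_{\gt l}$ by Chevalley--Shephard--Todd. Matching this against the Chevalley restriction $\cS(\gt g)^{\gt g}\cong\mK[\gt h]^{W_{\gt g}}$ on $\gt h=\gt h_L\oplus\gt c$, and exploiting the splitting of $\mK[\gt h]^{W_{\gt g}}\twoheadrightarrow\mK[\gt c]^{W_{\gt c}}$ afforded by surjectivity, one chooses the extra $F_k$'s so that each $F_k^\bullet|_{\hat{\gt l}\oplus\hat{\gt c}}$, modulo terms of lower $\hat{\gt c}$-degree, represents a distinct basic $W_L$-invariant on $\gt h_L$. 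The defect $\deg F_k-\deg_t F_k$ then equals the corresponding $f_j$, and summing over $k>r$ yields the required total defect $\dim\gt b_{\gt l}$.

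The main obstacle is precisely this matching step: one must show rigorously that, under surjectivity, all $\ell-r$ basic $W_L$-invariants can be realised as highest-component restrictions of suitably chosen extra generators with correct bi-degrees. This is the technical heart of the argument and rests on a Harish-Chandra--type compatibility between $W_{\gt g}$, $W_L$, and $W_{\gt c}$ guaranteed by the surjectivity hypothesis.
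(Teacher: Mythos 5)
Your overall strategy is the paper's: reduce to producing homogeneous generators with $\sum_i\deg_t F_i\le\dim\gt g_1$, then let Theorem~\ref{contr-deg} (with the ``codim--2'' property from \cite{Dima06}) do the rest, and your arithmetic reduction to the total-defect estimate $\sum_{k>r}(\deg F_k-\deg_t F_k)\ge\dim\gt b_{\gt l}$ via Lemma~\ref{rk-dim} is correct. But the step you yourself flag as the ``technical heart'' --- matching the defects of the extra generators bijectively with the degrees $f_1,\ldots,f_{\ell-r}$ of the basic $L$-invariants --- is not an argument, it is a restatement of what must be proved, and no mechanism is offered by which the surjectivity hypothesis produces such a matching. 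The only lower bound your restriction-to-$\hat{\gt l}\oplus\hat{\gt c}$ argument actually yields is that each defect $d_k$ is a degree in which $\cS(\gt l)^{\gt l}$ is nonzero. That suffices when $\gt l$ is semisimple with all invariant degrees even (it gives $d_k\ge 2$), but it collapses to $d_k\ge 1$ whenever $\gt l$ has a nontrivial centre --- precisely what happens for $(\gt{so}_{2\ell},\gt{gl}_\ell)$ with $\ell$ odd, where $\gt l=(\gt{sl}_2)^{[\ell/2]}\oplus\gt{so}_2$ and one nevertheless needs all but one defect to be $\ge 2$. So the uniform argument cannot close without additional, case-specific input.

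The paper avoids this entirely: it invokes \cite{Dima06} to reduce to exactly four symmetric pairs (one exceptional, three classical series) and handles each with ad hoc structural facts --- the evenness of the invariant degrees of $\gt l$ combined with the density of $\tilde G(\hat{\gt l}\oplus\hat{\gt c})$ in $\g^*$ to force $\deg_t F_j\le\deg F_j-2$; the block structure of $\gt g_1$ in $\gt{sp}_{2n+2m}$ to bound $\deg_t H_i\le 4m$; and, for the odd orthogonal case, the parity of bi-degrees together with the vanishing of $\det$ on $\gt g_1$ to get $\deg_t H_\ell\le\ell-1$ for the pfaffian. Your proposal does not mention this reduction to the remaining pairs at all, and if you intend a genuinely classification-free proof you would be proving something stronger than the paper does; as written, the matching step is a gap, not a lemma.
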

\begin{proof}
According to \cite{Dima06}, there are 4 pairs to consider. 
For one of them the existence of a g.g.s. was 
established in Example~\ref{e7-sym}. 
Our next goal is to construct good generating systems for 3 classical pairs listed 
in Proposition~\ref{remaining}. 
We always assume that a set of generators $F_1,\ldots,F_\ell$ in 
$\cS(\gt g)^{\gt g}$ is normalised in order to satisfy the Kostant equality. 

For the first pair, with $\gt g=\gt{sp}_{2n+2m}$, 
we start with a set of generating invariants $\{H_1,\ldots,H_\ell\}\subset \cS(\gt g)^{\gt  g}$,
where each  $H_i$ is the sum of all 
principal $2i$-minors (this is also a coefficient of the characteristic polynomial). 
%%%% This system of generators is not good and will be modified in the proof. 
As can be readily seen from the block structure of this symmetric pair (Figure~\ref{symm-sp}), $\deg_t H_i\le 4m$ for all $i$. 
\begin{figure}[htb]
{\setlength{\unitlength}{0.023in}
\begin{center}
\begin{picture}(50,50)(0,0)
\put(0,0){\line(1,0){50}}
\put(0,0){\line(0,1){50}}
\put(50,0){\line(0,1){50}}
\put(0,50){\line(1,0){50}}
\put(0,20){\line(1,0){50}}
\put(30,0){\line(0,1){50}}

\put(10,35){$\gt{sp}_{2n}$}
\put(12,8){$\gt g_1$}
\put(33,7){$\gt{sp}_{2m}$}
\put(36,35){$\gt g_1$}
\end{picture}
\end{center}}
\caption{Symmetric decomposition of $\gt{sp}_{2n+2m}$}\label{symm-sp}
\end{figure}
To be more precise, for $1\le i\le m$, the highest $t$-components $H_i^\bullet$ 
lie in $\cS(\gt g_1)$ and form a generating set in $\mK[\gt g_1^*]^{G_0}$. 
Therefore set $F_i:=H_i$ for $i\le m$. 
Here $\gt l=(\gt{sl}_2)^m\oplus\gt{sp}_{2n-2m}$ and all symmetric $\gt l$-invariants 
are of even degrees. Applying the same trick as in Example~\ref{e7-sym},  
we can modify $H_j$ with $m<j\le 2m$ to $F_j$ in such a way that 
$\deg_t F_j\le 2j-2$. Remaining generators stay as they are, $F_i=H_i$ for 
$i>2m$. Summing up 
$$
\sum\limits_{i=1}^{\ell} (\deg F_i-\deg_t F_i) \ge 2m + \sum\limits_{j=1}^{n-m} 2j = 
\dim\gt b_{\gt l}\,.
$$
Making use of Lemma~\ref{rk-dim} and again of the equality $\sum\deg F_i=\dim\gt b$, we get that $\sum\deg_t F_i\le \dim\gt g_1=D_t$. %%%By Theorem~\ref{contr-deg}({\sf i}),({\sf ii}),
%% the polynomials $F_i$ form a  g.g.s..

\vskip0.5ex

For the second pair, with $\gt g=\gt{so}_{2\ell}$, we have $\gt l=(\gt{sl}_2)^{\ell/2}$,  then $\ell$ is even, and $\gt l=(\gt{sl}_2)^{[\ell/2]}\oplus\gt{so}_2$, then $\ell$ is odd. 
In case $\ell$ is even, we argue as in Example~\ref{e7-sym}. 
Choose homogeneous generators $F_i\in\cS(\gt g)^{\gt g}$ such that the highest components $F_1^\bullet,\ldots,F^\bullet_{\ell/2}$  form a generating set
in $\mK[\gt g_1^*]^{G_0}$, and $\deg_t F_i\le \deg F_i-2$ for  $i>\ell/2$.  
Then, taking into account Lemma~\ref{rk-dim}, we get
$$
\sum\limits_{i=1}^{\ell}\deg_t F_i \le \sum\limits_{i=1}^{\ell}\deg F_i - \ell =
\dim\gt b-\dim\gt b_{\gt l}=\dim\gt g_1=D_t\,.
$$
%%By Theorem~\ref{contr-deg}({\sf i}),({\sf ii}),
%% the polynomials $F_i$ form a  g.g.s..

The case of odd $\ell$ is more interesting. 
We begin with a set of generating invariants 
$\{H_1,\ldots,H_\ell\}\subset \cS(\gt g)^{\gt  g}$,
where each $H_i$ with $i\ne \ell$ is the sum of all 
principal $2i$-minors and $H_\ell$ is the pfaffian, in particular, 
$\deg F_\ell=\ell$ is odd. One can realise $\gt{so}_{2\ell}$ as a set of
$2\ell{\times}2\ell$ matrices skew-symmetric with respect to the anti-diagonal. 
Then elements of $\gt g_1$ have block structure as shown in Figure~\ref{symm-so}. 
\begin{figure}[htb]
{\setlength{\unitlength}{0.023in}
\begin{center}
\begin{picture}(40,40)(0,0)
\put(0,0){\line(1,0){40}}
\put(0,0){\line(0,1){40}}
\put(40,0){\line(0,1){40}}
\put(0,40){\line(1,0){40}}
\put(0,20){\line(1,0){40}}
\put(20,0){\line(0,1){40}}

\put(8.5,28){$0$}
\put(8,8){$C$}
\put(29,8){$0$}
\put(27,28){$B$}
\end{picture}
\end{center}}
\caption{$\gt g_1$ for $(\gt{so}_{2\ell},\gt{gl}_{\ell})$. Here the matrices $B$ and $C$ are skew-symmetric with respect to the anti-diagonal.}\label{symm-so}
\end{figure}
This implies that all bi-homogenous (in $\gt g_0$ and $\gt g_1$) 
components of $H_i$ with $i<\ell$ have even degrees in $\gt g_1$ (and in $\gt g_0$).   
%%% just a property of a determinant, if a matrix of size 2i has 4 square blocks 
%%% A_pq, then we take s variables from A_{21}, i-s from A_{11}, and again s from A_{21}
%%% in each monomial in the determinant 

The highest $t$-components $H_i^\bullet$ with $2i<\ell$ form a generating set  
in $\mK[\gt g_1^*]^{G_0}$. Therefore we put $F_i:=H_i$ for these $i$. 
Each $H_j$ with $(\ell/2) < j < \ell$ can be modified to $F_j$ with 
$\deg_t F_j\le 2j-2$. And, finally, since $\det\xi=0$ for all $\xi\in\gt g_1$, 
we have $\deg_t H_\ell\le \ell-1$. Set $F_\ell:=H_\ell$. 
Then 
$$
\sum\limits_{i=1}^{\ell}\deg_t F_i \le \sum\limits_{i=1}^{\ell}\deg F_i - (\ell-1) -1 =
\dim\gt b-\dim\gt b_{\gt l}=\dim\gt g_1=D_t\,,
$$
where again we have used  Lemma~\ref{rk-dim}.
%%By Theorem~\ref{contr-deg}({\sf i}),({\sf ii}),
%% the polynomials $F_i$ form a  g.g.s..

\vskip0.5ex

For the third pair, with $\gt g=\gt{sl}_{2n}$, 
we have $\gt l=(\gt{sl}_2)^{n}$. Here everything works exactly as in Example~\ref{e7-sym}.
We take homogeneous invariants $F_i$ with $\deg F_i=i+1$. Then 
$F_i^\bullet$ with $1\le i<n$ form a generating set in $\mK[\gt  g_1^*]^{G_0}$ and, 
modifying $F_j$ if necessary,
we can assume that $\deg_t F_j^\bullet\le \deg F_j-2$ for $j\ge n$. 
In view of Lemma~\ref{rk-dim},
$$
\sum\limits_{i=1}^{\ell}\deg_t F_i \le \sum\limits_{i=1}^{\ell}\deg F_i - 2n =
\dim\gt b-\dim\gt b_{\gt l}=\dim\gt g_1=D_t\,.
$$
%%and by Theorem~~\ref{contr-deg}({\sf i}),({\sf ii}),
%%the polynomials $F_i$ form a g.g.s..

For all three series we have constructed $F_i\in\cS(\gt  g)^{\gt g}$ such that 
$\sum\limits_{i=1}^{\ell}\deg_t F_i \le D_t$. By Theorem~\ref{contr-deg}({\sf i}),({\sf ii}), 
the polynomials $F_i$ form a  g.g.s..
Since in addition all $F_i$ are homogeneous here as well as in Example~\ref{e7-sym},   
the polynomials  
$F_i^\bullet$ generate $\cS(\g)^{\g}$ by \cite[Theorem~4.2(i)]{Dima06} or 
by Theorem~~\ref{contr-deg}({\sf iii}), if one recalls that $\g$ has the ``codim-2" property
\cite[Theorem~3.3.]{Dima06}.
\end{proof}
 
%%%%% Our approach  also simplifies some proofs in \cite{Dima06}. 
%%% \begin{ex}\label{sym-f4}
%% Theorem~4.7 in \cite{Dima06} treats a symmetric pair 
%% $(F_4,\gt{so}_{9})$. It proves that there is a good generating system for the
%% corresponding $\Z_2$-contraction $\gt g\ard\g$. 
 %%%  \end{ex}
 %%
 %%%.........  {\bf what proofs?}
 %%% this one does not .............
 
\begin{cl}[{cf. \cite[Theorem~4.2(ii)]{Dima06}}]\label{symm-Kost}
Let $(\gt g, \gt g_0)$ be a symmetric pair such that Theorem~\ref{goodpairs}
applies. Then the Lie algebra $\g$ is of Kostant type.
\end{cl}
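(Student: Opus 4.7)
The plan is to assemble this corollary directly from the machinery already in place: Theorem~\ref{goodpairs} produces a homogeneous good generating system, Theorem~\ref{contr-deg}({\sf ii}) upgrades algebraic independence of the highest components to the Kostant equality, and Definition~\ref{Kostant-def} then applies verbatim.

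More explicitly, first I would invoke Theorem~\ref{goodpairs} to obtain homogeneous generators $F_1,\ldots,F_\ell$ of $\cS(\gt g)^{\gt g}$ whose construction in the proof of that theorem establishes the inequality $\sum_{i=1}^{\ell}\deg_t F_i\le \dim\gt g_1=D_t$. Normalise the $F_i$ (as is done throughout Section~\ref{sec:symm}) so that they satisfy the Kostant equality on $\gt g^*$. Since $\ind\g=\ell$ by \cite[Proposition 2.5]{Dima06}, i.e., the index is preserved under the $\Z_2$-contraction, Theorem~\ref{contr-deg}({\sf i}) forces the reverse inequality $\sum\deg_t F_i\ge D_t$, so equality $\sum\deg_t F_i=D_t$ holds.

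With equality in hand, Theorem~\ref{contr-deg}({\sf ii}) gives two conclusions simultaneously: the highest components $F_i^\bullet$ are algebraically independent, and they satisfy the Kostant equality with $\tilde\pi$, i.e.
\[
\frac{dF_1^\bullet\wedge\ldots\wedge dF_\ell^\bullet}{\omega}=\Lambda^{(n-\ell)/2}\tilde\pi.
\]
On the other hand, Theorem~\ref{goodpairs} itself asserts that these $F_i^\bullet$ freely generate $\cS(\g)^{\g}=Z\cS(\g)$. Combining the two facts, $Z\cS(\g)$ is freely generated by $\ell$ homogeneous polynomials satisfying the Kostant equality, which is exactly the content of Definition~\ref{Kostant-def}. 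Hence $\g$ is of Kostant type.

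There is essentially no obstacle here beyond bookkeeping; the work was already done in the proof of Theorem~\ref{goodpairs}, where the bound $\sum\deg_t F_i\le \dim\gt g_1$ was verified case by case. The only conceptual point worth emphasising is the use of the two-sided squeeze: the ``easy'' inequality coming from Theorem~\ref{contr-deg}({\sf i}) meets the ``hard'' (case-by-case) inequality of Theorem~\ref{goodpairs} exactly at $D_t$, and it is precisely this coincidence that promotes algebraic independence of the $F_i^\bullet$ to the full Kostant equality and thereby to the Kostant-type property of~$\g$.
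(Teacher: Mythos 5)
Your proposal is correct and matches the argument the paper intends: the corollary is left as an immediate consequence of Theorem~\ref{goodpairs} combined with Theorem~\ref{contr-deg}({\sf i}),({\sf ii}), which is exactly the squeeze you describe. No gaps.
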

 %
 %\begin{proof}
 % \end{proof}
 
\subsection{Poisson semicentre}\label{sub:tube}

\begin{df}\label{semi}
Let $\gt q$ be a Lie algebra. Then 
an elements $H\in\cS(\gt q)$ is called a {\it semi-invariant} if 
$\{\xi,H\}\in\mK H$ for all $\xi\in\gt q$.
We let $\cS(\gt q)_{\rm si}$ denote the 
$\mK$-algebra generated by semi-invariants. This algebra is also called 
the Poisson semicentre of $\cS(\gt q)$. 
\end{df}

One of the easy to deduce properties of the semi-invariants is that 
$\{\cS(\gt q)_{\rm si},\cS(\gt q)_{\rm si}\}=0$,
%% is that the Poisson bracket vanishes on this subalgebra, 
see e.g. \cite[Section~2]{oomsb}.
Recently Poisson semicentres were studied in  \cite{oomsb} and 
\cite{jsh}. In particular, \cite{oomsb} proves a degree inequality for Lie algebras 
$\gt q$ such that $Z\cS(\gt q)$ is a polynomial ring and 
$Z\cS(\gt q)=\cS(\gt q)_{\rm si}$. Here we show that some $\Z_2$-contractions 
$\g$ of simple Lie algebras also satisfy the second property.   
If $G_0$ is semismple, then $\g$ has no non-trivial characters and 
clearly $Z\cS(\g)=\cS(\g)_{\rm si}$. 

Until the end of this section we assume that $G_0$ has a non-trivial connected 
centre. 
Since $\gt g$ is simple, the centre of $G_0$ is $1$-dimensional, 
see e.g. \cite[Section~6]{Dima06} (on a classification free basis this fact follows from 
a description of the finite order automorphisms of $\gt g$ in terms of  Kac diagrams).
Let $G_0'$ be the derived group of $G_0$ and 
$\gt g_0'=[\gt g_0,\gt g_0]$ the derived Lie algebra. 
By an elementary observation that $\gt g_0{\oplus}[\gt g_0,\gt g_1]$ 
is an ideal of $\gt g$, one proves the equality $\g'=\gt g_0'\ltimes\gt g_1$.

Recall that a symmetric space (or a symmetric pair) can be 
either of tube type, meaning $\mK[\gt g_1]^{\gt g_0'}\ne\mK[\gt g_1]^{\gt g_0}$,
or non-tube type. 
For example, $(\gt{so}_{2\ell},\gt{gl}_\ell)$ is of tube type if and only if $\ell$ is even. 
There are many  characterisations
of symmetric pairs of tube type. 
If $(\gt g, \gt g_0)$ 
is of tube type, then there are  more semi-invariants than symmetric $\g$-invariants,
because $\cS(\gt g_1)^{\gt g_0'}\subset\cS(\g)_{\rm si}$.
That case may be worth of investigating. 
Here we deal with symmetric spaces of non-tube type.

The following observation helps to treat semi-direct products
(cf.  \cite{r} or \cite[Proposition~5.5]{Dima07}). 

\begin{lm}\label{stab}
Let $\gt q=\gt f\ltimes V$ be a semi-direct product of a Lie algebra $\gt f$ and an Abelian ideal $V$.
Take $x=a+b\in\gt q^*$ with $a(V)=0=b(\gt f)$. Let $\gt a={\rm Ann}(\gt f{\cdot}b)$ be a subspace of 
$(V^*)^*=V$. 
Then $\gt a\lhd \gt q_x$ and $\gt q_x/\gt a\cong (\gt f_b)_{\tilde a}$,
where $\tilde a$ is the restriction of $a$ to $\gt f_b$.
\end{lm}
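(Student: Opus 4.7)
The plan is to unpack the stabilizer condition directly and then identify $\gt a$ inside it. Writing $\gt q_x = \{\xi + v \in \gt f \oplus V\}$, the condition $x([\xi+v,\eta+w]) = 0$ for all $\eta + w \in \gt q$ splits into two by taking $\eta = 0$ and $w = 0$: using $a(V) = 0$, $b(\gt f) = 0$, $[V,V] = 0$, the $w$-part gives $b([\xi,w]) = 0$ for all $w \in V$, i.e.\ $\xi \in \gt f_b$, while the $\eta$-part gives the affine condition
\begin{equation*}
a([\xi,\eta]) = b([\eta,v]) \quad \text{for all } \eta \in \gt f.
\end{equation*}

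Next I would verify $\gt a \subset \gt q_x$: for $v \in \gt a$, take $\xi = 0$, and both conditions reduce to $b([\eta,v]) = 0$, which is exactly the definition of $\gt a$. Then I would check $\gt a \lhd \gt q_x$. For $\xi + v \in \gt q_x$ and $v' \in \gt a$, the bracket is $[\xi,v'] \in V$ (since $[V,V] = 0$), and for any $\eta \in \gt f$ the Jacobi identity gives
\begin{equation*}
b([\eta,[\xi,v']]) = b([\xi,[\eta,v']]) + b([[\eta,\xi],v']).
\end{equation*}
The last term is zero because $[\eta,\xi] \in \gt f$ and $v' \in \gt a$; the first term equals $-(\xi \cdot b)([\eta,v'])$ under the coadjoint action of $\gt f$ on $V^*$, which vanishes because $\xi \in \gt f_b$. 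Hence $[\xi,v'] \in \gt a$.

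Finally, to identify the quotient, I would consider the projection $\gt q_x \to \gt f_b$ given by $\xi + v \mapsto \xi$. Its kernel consists of $v \in V$ with $b([\eta,v]) = 0$ for all $\eta$, which is precisely $\gt a$. For the image, note that the map $\phi : V \to \gt f^*$, $v \mapsto b([\cdot,v])$, has kernel $\gt a$ and image equal to the annihilator of $\gt f_b$ in $\gt f^*$: one inclusion is immediate since $\eta \in \gt f_b$ forces $b([\eta,v]) = 0$, and a dimension count via $V/\gt a \cong (\gt f/\gt f_b)^*$ gives equality. Thus the affine equation $a([\xi,\cdot]) = \phi(v)$ is solvable in $v$ exactly when $a([\xi,\eta]) = 0$ for all $\eta \in \gt f_b$, which is the condition $\xi \in (\gt f_b)_{\tilde a}$. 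The projection is therefore surjective onto $(\gt f_b)_{\tilde a}$ with kernel $\gt a$, giving the claimed isomorphism.

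The only mildly technical step is the ideal check, which comes down to recognizing the combination of Jacobi and $\xi \in \gt f_b$; everything else is a direct unpacking of definitions.
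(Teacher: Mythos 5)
Your proof is correct and follows essentially the same route as the paper: unpack the stabiliser condition into the two parts $\xi\in\gt f_b$ and $a([\xi,\cdot])=b([\cdot,v])$, identify the kernel of the projection to $\gt f$ with $\gt a$, and use the dimension count $\dim(V/\gt a)=\dim\gt f-\dim\gt f_b$ to see that the image of $v\mapsto b([\cdot,v])$ is exactly the annihilator of $\gt f_b$, hence that the projection is onto $(\gt f_b)_{\tilde a}$. Your explicit Jacobi-identity verification that $\gt a$ is an ideal is fine, though it also follows at once from $\gt a=\gt q_x\cap V$ with $V\lhd\gt q$, which is how the paper leaves it implicit.
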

\begin{proof}
Note that $V{\cdot}b$ is zero on $V$, because $[V,V]=0$, and therefore $\gt q_x\subset\gt f_b\ltimes V$. 
It is also quite clear that $V{\cdot}b\subset {\rm Ann}(\gt f_b{\oplus}V)$. Hence 
$\gt q_x\subset  (\gt f_b)_{\tilde a}\ltimes V$. By the dimension reasons, 
$V{\cdot}b = {\rm Ann}(\gt f_b{\oplus}V)$ and for each $\xi\in(\gt f_b)_{\tilde a}$, there is $\eta\in V$ such that 
$\eta{\cdot}b=\xi{\cdot}a$. It remains to notice that $\gt q_x\cap V= {\rm Ann}(\gt f{\cdot}b)$.
\end{proof}

\begin{prop}\label{non-tube}
Suppose that $(\gt g, \gt g_0)$ is a symmetric pair of non-tube type.
Then $\cS(\g)_{\rm si}=Z\cS(\g)$.
\end{prop}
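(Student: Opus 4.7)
The plan is to combine Lemma~\ref{stab} with the standard observation that a semi-invariant of weight $\chi_H$ must satisfy $\chi_H|_{\g_x}=0$ at any $x\in\g^*$ where it is non-zero, and then to translate the non-tube hypothesis into the statement that the nontrivial characters of $\g$ do not vanish on a generic stabiliser.

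First I would identify the characters of $\g$. Since $z\in Z(\gt g_0)$ acts with non-zero weights on the summands $\gt p^\pm$ of $\gt g_1$, one has $[\gt g_0,\gt g_1]=\gt g_1$, so $[\g,\g]=\gt g_0'+\gt g_1$ and the characters of $\g$ form a one-dimensional space spanned by $\chi$ with $\chi(z)=1$ and $\chi(\gt g_0'+\gt g_1)=0$; hence every homogeneous semi-invariant $H\in\cS(\g)$ has weight $k\chi$ for some $k\in\mK$, and it suffices to show that any non-zero such $H$ has $k=0$. From $\{\xi,H\}=\chi_H(\xi)H$ and $\{\xi,H\}(x)=d_xH(\mathrm{ad}^*_\xi x)$ one gets $\chi_H|_{\g_x}=0$ at any point $x$ with $H(x)\neq 0$; the strategy is therefore to show $\chi|_{\g_x}\neq 0$ for a generic $x\in\g^*$, which will rule out $k\neq 0$.

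To compute $\g_x$ I would apply Lemma~\ref{stab} with $\gt f=\gt g_0$ and $V=\gt g_1$. Taking $x=a+b$ with $b\in\gt c$ generic (so that $\gt g_{0,b}=\gt l$ and $\gt g_{1,b}=\gt c$) and $\tilde a=a|_{\gt l}$ generic, $\g_x$ fits into $0\to\gt c\to\g_x\to\gt l_{\tilde a}\to 0$, where $\gt l_{\tilde a}$ is a Cartan subalgebra of the reductive Lie algebra $\gt l$. Because $\chi$ annihilates $\gt c\subset\gt g_1$, it descends to $\g_x/\gt c\cong\gt l_{\tilde a}$; the restriction $\chi|_{\gt g_0}$ is the projection onto $\mK z$ along $\gt g_0'$, a Lie-algebra homomorphism, and since any Cartan of a reductive Lie algebra contains its centre, the non-vanishing of $\chi|_{\gt l_{\tilde a}}$ is equivalent to $\gt l\not\subset\gt g_0'$.

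The crucial structural step is therefore to deduce $\gt l\not\subset\gt g_0'$ from the non-tube hypothesis. The equality $\mK[\gt g_1]^{G_0'}=\mK[\gt g_1]^{G_0}$ forces generic $G_0'$- and $G_0$-orbits on $\gt g_1^*$ to coincide, since these closed reductive orbits are separated by invariants. For a generic $b$ the equality $G_0\cdot b=G_0'\cdot b$ is equivalent to $T\subset G_0'\cdot(G_0)_b$, where $T$ is the identity component of $Z(G_0)$; taking $(G_0)_b=L$, this reads $G_0=G_0'\cdot L$, so $L$ surjects onto the one-dimensional torus $G_0/G_0'$. Passing to Lie algebras gives $\gt l\not\subset\gt g_0'$, providing the required contradiction and proving $\cS(\g)_{\rm si}=Z\cS(\g)$. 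The main obstacle will be this last structural translation: one must carefully justify the passage from equality of invariant rings to equality of generic closed orbits, and then match group- and Lie-algebra-level statements to extract $\gt l\not\subset\gt g_0'$.
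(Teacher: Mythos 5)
Your proof is correct and follows essentially the same route as the paper: both arguments hinge on Lemma~\ref{stab} applied at a generic point $\hat a+\hat b\in\hat{\gt l}\oplus\hat{\gt c}$ together with the deduction, via separation of generic orbits by invariants, that the non-tube hypothesis forces $\gt g_0=\gt g_0'+\gt l$. Your formulation via non-vanishing of the character on the generic stabiliser, $\chi|_{\g_x}\ne 0$, is equivalent to the paper's formulation $\g=\g'+\g_x$ (hence $(\tilde G')^{\circ}x=\tilde G^{\circ}x$), since $\ker\chi=\g'$ has codimension one.
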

\begin{proof}
Here we consider the connected groups $\tilde G^{\circ}$ and 
$(\tilde G')^{\circ}=(G_0')^{\circ}\ltimes\exp(\gt g_1)$. 
Each character of $\tilde G^{\circ}$ is trivial on 
$(\tilde G')^{\circ}$, hence $\cS(\g)_{\rm si}\subset \cS(\g)^{\g'}$.
%Moreover, %%since $\tilde G$ is algebraic, 
(In fact, $\cS(\g)_{\rm si} =\cS(\g)^{\g'}$).
Next we take $H\in \cS(\g)^{\g'}$ and show that it is an invariant 
of $\tilde G^{\circ}$.

Since $\gt g_0'$ is semisimple, $\mK(\gt g_1)^{\gt g_0'}$ is the quotient field 
of $\mK[\gt g_1]^{\gt g_0'}$. By Rosenlicht's theorem, generic orbits of an algebraic group, in our case $(G_0')^{\circ}$, 
are separated by rational invariants. Thereby 
the equality $\mK[\gt g_1]^{\gt g_0'}=\mK[\gt g_1]^{\gt g_0}$ implies that 
$G_0^{\circ}$ and 
$(G_0')^{\circ}$ have the same generic orbits in $\gt g_1$ and $\gt g_1^*$. 
On the Lie algebra level this means that $\gt g_0=\gt g_0'+\gt g_{0,b}$ for generic $b\in\gt g_1$. 

Suppose that $x=\hat a+\hat b\in\g^*$, where $\hat a$ and $\hat b$ correspond to generic $a\in{\gt l}$ and $b\in{\gt c}$, respectively. In view of Lemma~\ref{stab} and the fact that $\gt l=\gt g_{0,b}$ is reductive,  
$\g_x=\gt l_a{\ltimes}([\gt g_0,b])^{\perp}$, where 
$([\gt g_0,b])^{\perp}=\{\eta\in\gt g_1\mid [b,\eta]\in \gt g_0^{\perp}\}$
%%%the orthogonal complement is taken in 
(the orthogonal complement is taken with respect to the Killing form of $\gt g$). 
%%% More or less by the definition of $\gt c$, we have $([\gt g_0,b])^{\perp}=\gt c$. 

Since $\gt g_0=\gt g_0'+\gt l$ and $\gt l_a$ contains 
the centre of $\gt l$, we have also $\gt g_0=\gt g_0'+\gt l_a$ and $\g=\g'+\g_x$. 
%
%
%Further, $[\gt g_0',b]=[\gt g_0,b]$ and $(\gt g_0')_b$ is a Lie ideal of $\gt g_{0,b}=\gt l$ of 
% codimension $1$. This means that $\gt l$ is a direct sum of $(\gt g_0')_b$ and 
% a one-dimensional central subalgebra. Therefore the stabilisers of 
% $a$ in $\gt l$ and $(\gt g_0')_b$ are different and hence $\dim\g'_x=\dim\g_x-1$,
This leads to the equalities $\g'{\cdot}x=\g{\cdot}x$ and $\dim\tilde G'x=\dim\tilde Gx$. 
In addition, $(\tilde G')^{\circ}$ is a normal subgroup of $\tilde G^{\circ}$. Consequently, 
$(\tilde G')^{\circ}x=\tilde G^{\circ}x$. This equality holds on 
an open subset of $\tilde G^{\circ}(\hat{\gt l}{\oplus}\hat{\gt c})$, 
which is a dense subset of $\g^*$, because $\overline{{\tilde G}(\hat{\gt l}{\oplus}\hat{\gt c})}=\g^*$, as we know from Example~\ref{e7-sym}, and 
$\g^*$ is irreducible. 
Thus, $H$ is constant on a generic $\tilde G^{\circ}$-orbit and hence 
$H\in Z\cS(\g)$.    
\end{proof}

\section{Applications to E.\,Feigin's contraction}  

In this section, $\gt g=\Lie G$ is a simple Lie algebra of rank $\ell$,
$B\subset G$ is a Borel subgroup, and $\gt b=\Lie B$ is a Borel subalgebra.
We keep the assumption that $\mK=\overline{\mK}$. 
Fix a decomposition $\gt g=\gt b{\oplus}\gt n^-$, where 
$\gt n^-$ is the nilpotent radical of an opposite Borel, and consider 
a one-parameter contraction of $\gt g$ given by this decomposition. 
For the resulting Lie algebra $\g$, we have $\g=\gt b\ltimes\gt n^-$, 
where $\gt n^-$ is an Abelian ideal. 
This contraction was recently introduces by  E.\,Feigin  in  \cite{feigin1}. 
His motivation came from some problems in representation theory \cite{ffp}. 
Degenerations of flag varieties of $\gt g$ related to the contraction 
$\gt g\ard\g$ were further studied in \cite{feigin2} and \cite{fmp}.

Let $\{\alpha_1,\ldots,\alpha_\ell\}$ be a set of the simple roots and $e_i,f_i$ 
corresponding elements of the Chevalley basis. 
Set $\gt g_{\rm reg}=\{x\in\gt g\mid \dim\gt g_x=\ell\}$, 
where $\gt g_x$ is the stabiliser in the adjoint representation,  
$\gt n_{\rm reg}:=\gt n\cap\gt g_{\rm reg}$. 
If $x\in\gt n_{\rm reg}$,  then $\gt n_x=\gt b_x=\gt g_x$ 
and $Bx$  is a  dense open orbit in $\gt n$.
Hence $\gt n_{\rm reg}$ is a single $B$-orbit. 
The complement of this orbit was described by Kostant
 \cite[Theorem~4]{ko63} and 
$\gt n\setminus\gt n_{\rm reg}=\bigcup\limits_{i=1}^{\ell} \gt D_i$, where each 
$\gt D_i$ is a linear subspace of dimension $\dim\gt n-1$ orthogonal to $f_i$. 
We will also need an interpretation of $\gt D_i$ as closures of orbital varieties. 
It is a classical fact that for each nilpotent orbit $Ge\subset \gt g$, all irreducible components of $Ge\cap\gt n$ are of dimension $\frac{1}{2}\dim Ge$. In particular, 
$\gt n\setminus\gt n_{\rm reg}=\overline{{\mathcal O}^{\rm sub}\cap \gt n}$, 
where ${\mathcal O}^{\rm sub}$ is the unique nilpotent $G$-orbit in $\gt g$ of dimension $\dim\gt g-\ell-2$. 

Making further use of the Killing form $(\,\,,\,)$ of $\gt g$, we identify $(\gt n^{-})^*$ with the nilpotent radical $\gt n\subset\gt b$ 
and fix the dual decomposition 
$\g^*=\gt b^*{\oplus}\gt n^{\rm ab}$, where $\phantom{i}^{\rm ab}$
indicates that $\gt n^{\rm ab}$ is a space of the linear functions 
on an Abelian ideal. Let also $\gt n_{\rm reg}^{\rm ab}$ be  
a subset of $\gt n^{\rm ab}\subset\g^*$ corresponding to $\gt n_{\rm reg}$.
We identify $\gt D_i$ with subsets of $\gt n^{\rm ab}$ using the same letters for them.
 
Next statement was first proved in \cite{py-feig}.
 
\begin{lm}[cf. Lemma~\ref{stab}]\label{ind-f}
We have $\ind\g=\ell$.
\end{lm}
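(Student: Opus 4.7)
The plan is to apply Lemma~\ref{stab} directly to the semi-direct product $\g=\gt b\ltimes\gt n^-$ at a point $x=a+b\in \gt b^*\oplus\gt n^{\rm ab}=\g^*$, where $b$ corresponds under the Killing form identification $\gt n^{\rm ab}\cong\gt n$ to some $\bar b\in\gt n$.

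First I would verify that, under this identification, the coadjoint action of $\gt b$ on $\gt n^{\rm ab}$ coincides with the ordinary adjoint action of $\gt b$ on $\gt n\subset\gt g$. The action of $\xi\in\gt b$ on $v\in\gt n^-$ inside $\g$ is ${\rm pr}_{\gt n^-}([\xi,v]_{\gt g})$; unwinding the definitions and using that $(\gt b,\gt n)=0$ under the Killing form gives $(\xi{\cdot}b)(v)=([\xi,\bar b],v)$ with $[\xi,\bar b]\in\gt n$. Hence $\xi{\cdot}b$ corresponds to $[\xi,\bar b]\in\gt n$, and in particular $\gt b_b=\gt b_{\bar b}$.

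Next, specialise to $\bar b\in\gt n_{\rm reg}$, an open dense condition on $b$. Then $\gt b_{\bar b}=\gt g_{\bar b}$ is abelian of dimension $\ell$ (the classical fact about centralisers of regular elements), and $\dim(\gt b{\cdot}b)=\dim\gt b-\ell=\dim\gt n=\dim\gt n^-$. So Lemma~\ref{stab} gives $\gt a=\mathrm{Ann}(\gt b{\cdot}b)=0$ and $\g_x\cong (\gt b_b)_{\tilde a}$. Since $\gt b_b=\gt g_{\bar b}$ is abelian, its coadjoint action on its own dual is trivial, hence $(\gt b_b)_{\tilde a}=\gt b_b$ for every $\tilde a$, and $\dim\g_x=\ell$.

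This equality holds on an open subset of the irreducible variety $\g^*$, so by upper semi-continuity of $x\mapsto\dim\g_x$ the value $\ell$ is the generic one, and $\ind\g=\ell$. The only delicate point is correctly identifying the contracted coadjoint action of $\gt b$ on $\gt n^{\rm ab}$ with the ordinary adjoint action of $\gt b$ on $\gt n$; after that the result is a dimension count relying on Kostant's description of the regular nilpotent centraliser.
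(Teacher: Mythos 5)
Your argument is correct. The core step --- producing a point of $\g^*$ with $\ell$-dimensional stabiliser by letting the $\gt n^{\rm ab}$-component come from a regular nilpotent element of $\gt n$ --- is exactly the paper's, which takes the special case $a=0$ and reads off $\g_x=\gt b_x=\gt n_x$ directly from Kostant's description of the regular nilpotent centraliser; your use of Lemma~\ref{stab} spells out the same computation and also covers an arbitrary $\gt b^*$-component $a$ (harmless, since $\gt b_b=\gt g_{\bar b}$ is abelian, so $(\gt b_b)_{\tilde a}=\gt b_b$ for any $\tilde a$). Where you genuinely differ is in the lower bound $\ind\g\ge\ell$: the paper gets it for free from the general principle that $\rk\pi$ cannot increase under a contraction (the rank of $\pi_t$ can only drop in the limit $t\to 0$), whereas you obtain it by verifying $\dim\g_x=\ell$ on the dense open set $\gt b^*{\times}\gt n_{\rm reg}^{\rm ab}$ and invoking upper semi-continuity of $x\mapsto\dim\g_x$ on the irreducible variety $\g^*$. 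Both are valid; the paper's route is shorter, while yours is self-contained and never uses that $\g$ arises as a contraction of $\gt g$.
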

\begin{proof}
Clearly, $\rk\pi$ cannot get larger after a contraction, therefore 
$\ind\g\ge \ell$. 
On the other hand, take $x\in\gt n_{\rm reg}^{\rm ab}$ and extend it 
to a linear function on $\g^*$ by putting $x(\gt b)=0$.
Then $\g_x=\gt b_x=\gt n_x$ and it has dimension $\ell$.
Thus $\ind\g=\ell$.
\end{proof}

Another result of \cite{py-feig}, Theorem~3.3, states that $\cS(\g)^{\g}$ is freely 
generated by some polynomials $\widehat P_i$ (with $1\le i\le\ell$).
The construction of these polynomials $\widehat P_i$ starts with a system of homogeneous generators $F_i$ of $\cS(\gt g)^{\gt g}$ with 
$\deg F_i\le \deg F_{i+1}$. It is also shown that $\widehat P_i=F_i^\bullet$, 
\cite[Theorem 3.9]{py-feig}. 
We assume that $F_i$ are normalised to satisfy the Kostant equality.  

\begin{lm}\label{feig-kost}
Let $F_i$ be as above. Then $F_i^\bullet$ satisfy the Kostant equality with $\tilde\pi$ and therefore $\g$ is a Lie algebra of Kostant type. Besides, $\deg_t F_i=\deg F_i-1$ for all $i$. 
\end{lm}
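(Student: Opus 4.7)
The plan is to apply Theorem~\ref{contr-deg} to the contraction $\gt g\ard\g$ given by the decomposition $\gt g=\gt b\oplus\gt n^{-}$. Here $V=\gt n^{-}$, so $D_t=\dim\gt n^{-}=\dim\gt n$, and the index hypothesis of the theorem is met because $\ind\g=\ell=\ind\gt g$ by Lemma~\ref{ind-f}. Since the $F_i$ are normalised to satisfy the Kostant equality on $\gt g^*$, part~({\sf i}) of Theorem~\ref{contr-deg} immediately gives the lower bound
\[
\sum_{i=1}^{\ell}\deg_t F_i\ \ge\ D_t\ =\ \dim\gt n.
\]

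For a matching upper bound I would exploit the standard vanishing of symmetric invariants on the nilpotent cone: since each $F_i$ is a homogeneous $G$-invariant of positive degree and every element of $\gt n^{-}$ is nilpotent (hence $0$ lies in the closure of its adjoint orbit, e.g.\ via a Jacobson--Morozov one-parameter subgroup), the restriction $F_i|_{\gt n^{-}}$ vanishes identically. This means the bi-homogeneous component of $F_i$ of pure $\gt n^{-}$-degree equal to $\deg F_i$ is zero, i.e.
\[
\deg_t F_i\ \le\ \deg F_i-1 \quad\text{for every } i.
\]
Using the reductive identity $\sum\deg F_i=(\dim\gt g+\ell)/2=\dim\gt b$, summation yields
\[
\sum_{i=1}^{\ell}\deg_t F_i\ \le\ \sum_{i=1}^{\ell}(\deg F_i-1)\ =\ \dim\gt b-\ell\ =\ \dim\gt n\ =\ D_t.
\]

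Combining the two bounds forces $\sum\deg_t F_i=D_t$; since every summand is bounded above by $\deg F_i-1$ and the sum saturates, the pointwise equality $\deg_t F_i=\deg F_i-1$ must hold for all $i$, which is the second assertion. Now Theorem~\ref{contr-deg}({\sf ii}) applies to give algebraic independence of the highest components $F_i^\bullet$ and the Kostant equality with $\tilde\pi$. Finally, from \cite{py-feig} we already know that the $\widehat P_i=F_i^\bullet$ generate $\cS(\g)^{\g}$, so by Definition~\ref{Kostant-def} the Lie algebra $\g$ is of Kostant type.

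The argument is essentially free of obstacles once the upper bound $\deg_t F_i\le\deg F_i-1$ is in hand; the only mildly non-trivial ingredient is the vanishing of $G$-invariants on $\gt n^{-}$, but this is classical. The remaining work is purely arithmetic bookkeeping of dimensions and two direct invocations of Theorem~\ref{contr-deg}.
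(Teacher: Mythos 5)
Your proof is correct and follows essentially the same route as the paper: establish $\deg_t F_i\le\deg F_i-1$, sum to get $\sum\deg_t F_i\le\dim\gt b-\ell=\dim\gt n=D_t$, and then invoke Theorem~\ref{contr-deg}({\sf i}),({\sf ii}) together with the fact from \cite{py-feig} that the $F_i^\bullet$ generate $\cS(\g)^{\g}$. The only (harmless) difference is in the justification of the key inequality: the paper argues that $\deg_tF_i=\deg F_i$ would force $F_i^\bullet\in\cS(\gt n^-)^{\gt b}=\mK$, contradicting positivity of the degree, whereas you observe that the pure $\gt n^-$-component of $F_i$ is its restriction to $\Ann(\gt b)\cong\gt n$, which vanishes because positive-degree invariants vanish on nilpotent elements.
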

\begin{proof}
Recall that ${\mathcal S}(\gt n^-)^{\gt b}=\mK$.
If $\deg _tF_i=\deg F_i$, i.e., $F_i^\bullet\in \cS(\gt n^-)$, 
then also $F_i^\bullet\in {\mathcal S}(\gt n^-)^{\gt b}$. A contradiction. 
Hence $\deg_t F_i\le \deg F_i-1$ for each $i$ and 
$$
\sum \deg_t F_i \le \dim\gt b-\ell=\dim\gt n=D_t\,.
$$
By Theorem~\ref{contr-deg}({\sf i}),({\sf ii}), $\deg_t F_i=\deg F_i-1$, the polynomials 
$F_i^\bullet$ are algebraically independent and satisfy the Kostant equality 
with $\tilde\pi$.  Since, according to \cite[Section~3]{py-feig}, $F_i^\bullet$ generate $\cS(\g)^{\g}$, the Lie algebra $\g$ is of Kostant type.  
\end{proof}

Actually, the bi-degrees of $F_i^\bullet$ with respect to the decomposition $\gt g=\gt b{\oplus}\gt n^-$
have been already found in \cite{py-feig}.

\begin{rmk}
Lemma~\ref{feig-kost} implies that $\cj(F_1^\bullet,\ldots,F_\ell^\bullet)=\mathrm{Sing}\,\tilde\pi$. Therefore $\mathrm{Sing}\,\tilde\pi$ contains a divisor 
whenever $\gt g$ is not of type $A$, see \cite[Th.~4.2\&Prop.~4.3]{py-feig}. 
This means that outside of type $A$ we get curious examples of Lie algebras of Kostant type that does not 
have the ``codim-2" property.
\end{rmk}

Next we turn our attention to the subset $\mathrm{Sing}\,\tilde\pi=\g^*_{\rm sing}$. 
The proof of Lemma~\ref{ind-f} shows that $\gt b^*{\times}\gt n_{\rm reg}^{\rm ab}\subset \g^*_{\rm reg}$
and therefore $\g^*_{\rm sing}\subset \gt b^*{\times}(\bigcup\limits_{i=1}^{\ell} {\gt D_i})$, where 
the subspaces $\gt D_i$ are regarded as subsets of $\gt n^{\rm ab}$.

\begin{df}\label{def-fund}
Let $\gt q$ be an $n$-dimensional Lie algebra with $\ind\gt q=\ell$ 
and $\pi$ its Lie-Poisson tensor. 
Then 
we will say that  a polynomial $p$ is a {\it fundamental semi-invariant} of $\gt q$, if
$\Lambda^{(n-\ell)/2}\pi=p R$ with $R\in W^{n-\ell}$ (notation as in Section~\ref{PS}) and 
the zero set of $R$ in $\gt q^*$  
has codimension grater than or equal to $2$. 
\end{df}

In \cite{oomsb}, {\it the} fundamental semi-invariant is defined as 
the greatest common divisor of the $\rk\pi{\times}\rk\pi$ (here $\rk\pi=n-\ell$)
minors in the matrix of $\pi$. Our polynomial is a square root of that 
one (up to a non-zero scalar) and is a scalar multiple of the fundamental semi-invariant 
in the sense of \cite[Section~4.1]{jsh}. 

%%, %% see e.g. \cite[Lemma~2.1]. 
%%The only difference of our definition is that here it is considered 
%% up to a non-zero scalar. 

Let 
$\delta$ be the highest root, $e_\delta$ a highest root vector, and
$r_i=[\delta:\alpha_i]$ the $i$-th coefficient in the decomposition of $\delta$, 
i.e.,  $\delta=\sum r_i\alpha_i$.

As is well-known, the highest degree of a homogeneous generator, 
under our assumptions, $\deg F_\ell$, equals $1+\sum r_i$.
Since 
$F_\ell^\bullet$ has weight zero and is of degree $1$ in $\gt b$ and  ($\deg F_\ell-1$) in 
$\gt n^-$, up to a scalar multiple $F_\ell^\bullet = e_\delta \prod f_i^{r_i}$.
This is also proved in \cite[Lemma~4.1]{py-feig}.

Set $p:=\prod f_i^{r_i-1}$. Note that in type $A$ we have $r_i=1$ for all $i$ and hence $p=1$. Here we generalise a result of \cite{py-feig}, Proposition~4.3, stating that in type $A$ the singular set $\mathrm{Sing}\,\tilde\pi$
contains no divisors. 

\begin{thm}\label{fund-f}
Let $\g$ be Feigin's contraction of a simple Lie algebra $\gt g$. Then $p=\prod f_i^{r_i-1}$ is a fundamental semi-invariant of $\g$.
\end{thm}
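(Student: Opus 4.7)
The plan is to factor $p$ out of the Kostant-type identity for $\tilde\pi$ and then verify that the remaining factor has no divisorial zeros. By Lemma~\ref{feig-kost}, after a suitable normalisation,
\[
\Lambda^{(n-\ell)/2}\tilde\pi \;=\; \frac{dF_1^\bullet\wedge\cdots\wedge dF_\ell^\bullet}{\omega}.
\]
Since $F_\ell^\bullet=c\,e_\delta\prod_j f_j^{r_j}$ for a non-zero scalar $c$, I would differentiate directly to obtain
\[
dF_\ell^\bullet \;=\; c\,p\,\tilde\eta, \qquad
\tilde\eta \;:=\; \bigl(\textstyle\prod_j f_j\bigr)de_\delta + e_\delta\sum_k r_k\bigl(\prod_{j\ne k}f_j\bigr)df_k;
\]
each monomial of $dF_\ell^\bullet$ manifestly carries the factor $p=\prod_j f_j^{r_j-1}$. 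It follows that $\Lambda^{(n-\ell)/2}\tilde\pi=p\,R$ with $R := c\,dF_1^\bullet\wedge\cdots\wedge dF_{\ell-1}^\bullet\wedge\tilde\eta/\omega$ in $W^{n-\ell}$.

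The remaining task, and the main content of the theorem, is to show that $\{R=0\}\subset\g^*$ has codimension at least two. Since $\{\Lambda^{(n-\ell)/2}\tilde\pi=0\}=\g^*_{\mathrm{sing}}\subset\gt b^*\times\bigcup_i\gt D_i$, every divisorial component of $\{R=0\}$ must coincide with one of the hyperplanes $H_i:=\gt b^*\times\gt D_i=\{f_i=0\}$, so it suffices to show that, for each $i$, $R$ is non-zero at a generic point of $H_i$. A direct evaluation gives $\tilde\eta|_{H_i}=r_i\,e_\delta\prod_{j\ne i}f_j\cdot df_i$, a non-zero multiple of $df_i$ away from a proper subvariety of $H_i$, so the non-vanishing of $R$ at a generic point of $H_i$ is equivalent to that of
\[
J_i \;:=\; \frac{dF_1^\bullet\wedge\cdots\wedge dF_{\ell-1}^\bullet\wedge df_i}{\omega}
\]
at a generic point of $H_i$, i.e.\ to $f_i\nmid J_i$ in $W^{n-\ell}$.

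That $J_i\ne 0$ globally is a short weight argument: if $F_1^\bullet,\ldots,F_{\ell-1}^\bullet,f_i$ were algebraically dependent, breaking a hypothetical polynomial relation into $T$-isotypic pieces — and using that $f_i$ has weight $-\alpha_i\ne 0$ while each $F_k^\bullet$ has weight $0$ — would force a non-trivial relation among $F_1^\bullet,\ldots,F_{\ell-1}^\bullet$ alone, contradicting Lemma~\ref{feig-kost}. The subtler assertion, which I expect to be the main obstacle of the proof, is to upgrade this to $f_i\nmid J_i$, equivalently that the restrictions $F_1^\bullet|_{H_i},\ldots,F_{\ell-1}^\bullet|_{H_i}$ remain algebraically independent on the hyperplane $H_i$. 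I would try to establish this by combining the weight argument above with the bi-degree information on the generators $F_k^\bullet$ available from \cite{py-feig}, or by an explicit transversal analysis at a generic subregular element $y\in\gt D_i$ (where $\dim\gt b_y=\ell+1$) and using Lemma~\ref{stab} to reduce the non-vanishing of $J_i$ along $H_i$ to a Jacobian computation on $\gt b_y^*$ that can be controlled independently.
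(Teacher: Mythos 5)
Your reduction is exactly the one the paper uses: you factor $dF_\ell^\bullet = c\,p\,\tilde\eta$ with the same $1$-form $\tilde\eta$ (the paper's $L$), observe that the zero set of $R=dF_1^\bullet\wedge\cdots\wedge dF_{\ell-1}^\bullet\wedge\tilde\eta/\omega$ lies in $\gt b^*{\times}\bigl(\bigcup_i\gt D_i\bigr)$, and note that on $\gt b^*{\times}\gt D_i$ the form $\tilde\eta$ becomes a non-zero multiple of $df_i$ generically. Up to that point everything is correct. But the step you defer --- showing that $dF_1^\bullet\wedge\cdots\wedge dF_{\ell-1}^\bullet\wedge df_i$ does not vanish identically on $\gt b^*{\times}\gt D_i$ --- is the entire content of the theorem, and you only list candidate strategies (``I would try to establish this by\dots'') without carrying any of them out. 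Your global weight argument that $J_i\ne 0$ is fine but does not touch the divisibility question, so as written the proof is incomplete at its central point.

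The paper closes this gap by a pointwise computation at a well-chosen $x=b+e$ with $e\in\mathcal{O}^{\rm sub}\cap\gt D_i$ satisfying $(f_j,e)\ne 0$ for $j\ne i$ and $b(e_\delta)\ne 0$, and the argument rests on three facts you did not supply. First, each $F_j^\bullet$ has bi-degree $(1,\deg F_j-1)$ in $(\gt b,\gt n^-)$, so the $\gt b$-component $\xi_j$ of $d_xF_j^\bullet$ equals $d_eF_j$ and depends only on the $\gt n^{\rm ab}$-part of $x$. Second, for a subregular nilpotent $e$ the differentials $d_eF_1,\ldots,d_eF_\ell$ span a space of dimension exactly $\ell-1$ (Slodowy, or \cite[Theorem~10.6]{Dima07}). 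Third, the one-dimensional drop is absorbed entirely by the last generator: since $F_\ell^\bullet=a\,e_\delta\prod_j f_j^{r_j}$, its $\gt b$-component $\xi_\ell$ vanishes at such $x$, so $\xi_1,\ldots,\xi_{\ell-1}$ are already linearly independent in $\gt b$, and adjoining $df_i\in\gt n^-$ gives $\ell$ independent covectors, i.e.\ $R_x\ne 0$. This is essentially your second proposed route (``explicit transversal analysis at a generic subregular element''), but without the bi-degree fact and the $(\ell-1)$-dimensional span at subregular elements the route does not go through, so you should regard these as the missing ingredients rather than optional refinements.
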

\begin{proof}
Set $\cF=dF_1^\bullet\wedge\ldots\wedge dF_\ell^\bullet$. 
Consider also a differential $1$-form   
$$
L=\left(\prod_{i=1}^{\ell} f_i\right)de_\delta+e_\delta\sum_{i=1}^{\ell} r_i f_1\ldots f_{i-1}f_{i+1}\ldots f_{\ell}df_i
$$
and set $R:=dF_1^\bullet\wedge\ldots\wedge dF_{\ell-1}^\bullet\wedge L$.
Note that $dF_\ell^\bullet=a p L$ with $a\in\mK^{^\times}$ and therefore $\cF=a p R$.
In view of the Kostant equality for $F_i^\bullet$ established in Lemma~\ref{feig-kost}, we have to show that the zero set of $R$
contains no divisors. 

Clearly, the zero set of  $R$ is contained in $\g^*_{\rm sing}$ and we have to prove that 
$R$ is non-zero on each irreducible divisor in $\g^*_{\rm sing}$. 
As was already mentioned, 
the proof of Lemma~\ref{ind-f} together with a result of Kostant \cite[Theorem~4]{ko63} imply that 
$\g^*_{\rm sing}\subset \gt b^*{\times}(\bigcup\limits_{i=1}^{\ell} {\gt D_i})$, where 
${\gt D}_i$ are the components of $\overline{{\mathcal O}^{\rm sub}\cap \gt n}$. 
Fix $i\in\{1,\ldots,\ell\}$. There is an element $e=e(i)$ in ${\gt D}_i$ such that $e\in {\mathcal O}^{\rm sub}$ and 
$(f_j,e)\ne 0$ for all $j\ne i$. Take this $e$ and add to it $b\in\gt b^*$ such that 
$b(e_\delta)\ne 0$ forming a linear function $x=b+e$ on $\g$.
Evaluating $L$ at $x$ we get $L_x=a'df_i$ with $a'\in\mK^{^\times}$. 
The goal is to prove that $d_xF_j^\bullet$ with $1\le j <\ell$ and $L_x$ are linear 
independent. To this end we calculate $d_xF^\bullet_j$.

Each $d_xF_j^\bullet$ can be considered as an element of $\g$ and therefore 
$d_xF_j^\bullet=\xi_j+\eta_j$ with $\xi_j\in\gt b$ and $\eta_j\in\gt n^{-}$. 
Since each $F_j^\bullet$ has degree $1$ in $\gt b$, 
$\xi_i=d_eF_i$, where $e$ is regarded as an element of $\gt g\cong\gt g^*$.  
By e.g. \cite[Sect.\,8.3,\,Lemma\,1]{slodowy} or
\cite[Theorem~10.6]{Dima07}, $d_eF_j$ with $j\le \ell$ generate a subspace 
of dimension $\ell-1$. For $d_xF_{\ell}^\bullet$ we have two possibilities, either 
it is zero (if $r_i>1$), or proportional to $df_i$. In any case, $\xi_\ell=0$. 
Therefore $\xi_1,\ldots,\xi_{\ell-1}$ are linear independent and 
clearly $d_xF^\bullet_j$ with $1\le j<\ell$ and $L_x$ together generate a subspace 
of simension $\ell$. This proves that $R$ is non-zero on each $\gt b^*{\times}\gt D_i$. 
Therefore $\dim\{\xi\in\g^*\mid R_\xi=0\}\le\dim\g-2$
and we are done. 
\end{proof}

\subsection{Proper semi-invariants}
A semi-invariant is said to be {\it proper} if it is not an invariant. 
The Lie algebra $\g$ possesses 
proper symmetric semi-invariants, for example $e_\delta$. Therefore describing
$\cS(\g)_{\rm si}$ is an interesting task. 

Set $H_i=F_i^\bullet$ for $1\le i <\ell$; $H_i=f_j$, where $j=i-\ell+1$ for 
$\ell\le i<2\ell$; and $H_{2\ell}=e_\delta$. Clearly all these 
functions are semi-invariants of $\g$. We will show that they generate 
$\cS(\g)_{\rm si}$. 

Let $\gt h\subset\gt b$ be a Cartan subalgebra of $\gt g$, 
$U\subset B$ the unipotent radical, and $\g'$ the derived algebra of $\g$. 
We have $\g'=\gt n{\ltimes}\gt n^{-}$. Note that the Lie algebra $\g'$ has only trivial 
characters. 

\begin{lm}\label{0}
We have
$\cS(\g)_{\rm si} = Z\cS(\g')$. In particular, $\cS(\g)_{\rm si}$
is a subalgebra of $\cS(\g')$. 
\end{lm}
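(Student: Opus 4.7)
My plan is to prove the chain
\[
\cS(\g)_{\rm si}=\cS(\g)^{\g'}=\cS(\g')^{\g'}=Z\cS(\g'),
\]
from which both assertions of the lemma follow. The first equality is formal: a character of $\g$ necessarily vanishes on $[\g,\g]=\g'$, so every semi-invariant lies in $\cS(\g)^{\g'}$; conversely $\gt h\subset\g$ normalises $\g'$, so $\cS(\g)^{\g'}$ is $\gt h$-stable and decomposes into $\gt h$-weight spaces, each weight vector being a semi-invariant. The same weight-space decomposition inside $\cS(\g')^{\g'}$ yields $Z\cS(\g')\subseteq\cS(\g)_{\rm si}$.

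The essential step is $\cS(\g)^{\g'}\subseteq\cS(\g')$. I would argue geometrically through the projection $p\colon\g^*\to(\g')^*$ dual to the inclusion $\g'\hookrightarrow\g$; its fibres are $\ell$-dimensional affine translates of $\gt h^*$, and a polynomial in $\cS(\g)$ belongs to $\cS(\g')$ if and only if it is constant on these fibres. Thus it suffices to verify that a generic $\tilde G'^{\circ}$-orbit on $\g^*$ covers the fibre of $p$ through its base point.

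Take $x\in\gt n^{\rm ab}_{\rm reg}$ extended by zero on $\gt b$, so that $\alpha_2=x$ corresponds to a regular nilpotent $e\in\gt n$. Lemma~\ref{ind-f} provides $\g_x=\gt n_e\subset\g'$ of dimension $\ell$, hence $\g'_x=\g_x$. A direct calculation shows that $\xi\cdot x$ for $\xi\in\g'$ does not depend on the $\gt h^*$-component of $x$, so the vectors $\xi\cdot x$ with $\xi\in\g'_{\alpha_2}$ point purely in the fibre direction, and the orbit $\tilde G'^{\circ}\cdot x$ meets $p^{-1}(\alpha_2)$ in an affine subspace of dimension $\dim\g'_{\alpha_2}-\ell$. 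Fibre transitivity then amounts to the identity $\ind\g'=2\ell$.

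The main obstacle is this index identity. I would verify it directly at $\alpha_2=e$: the stabiliser conditions $[e,u_0]=0$ and $[e,v_0]_{\gt n^-}=0$ define subspaces each of dimension $\ell$ in $\gt n$ and $\gt n^-$ respectively — the first is Kostant's centraliser $\gt n_e$, the second is the span of the simple negative root vectors $f_{\alpha_j}$, since $[e,f_{\alpha_j}]\in\gt h$ while $[e,f_\beta]_{\gt n^-}\ne 0$ for non-simple positive $\beta$ by the injectivity of $\ad e$ on $\gt n^-$. Equivalently, Ra\"\i s's formula for $\g'=\gt n\ltimes\gt n^-$, combined with Kostant's theorem that $\gt n_e$ is abelian of dimension $\ell$, yields $\ind\g'=\dim\gt n^- -(\dim\gt n-\ell)+\ind\gt n_e=2\ell$. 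The fibre-covering property is Zariski open, so once it holds at $x=e$ it holds on a dense open subset of $\g^*$; a polynomial in $\cS(\g)^{\g'}$ constant on a dense family of fibres of $p$ is constant on every fibre, whence it lies in $\cS(\g')$.
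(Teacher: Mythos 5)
Your proof is correct, but the essential step is handled by a genuinely different mechanism than the one in the paper. The formal reductions coincide: both arguments use that a semi-invariant is killed by $\g'=[\g,\g]$ and that the diagonalisable action of $\gt h$ on $Z\cS(\g')$ splits it into weight vectors, each of which is a semi-invariant of $\g$, giving $Z\cS(\g')\subseteq\cS(\g)_{\rm si}$. The difference is in how one lands inside $\cS(\g')$. The paper invokes a result of Borho (Satz~6.1 of [BGR]): if $\lambda$ is the weight of a non-zero semi-invariant, then $\cS(\g)_{\rm si}\subset\cS(\ker\lambda)$; applying this to the linear semi-invariants $f_1,\ldots,f_\ell$ of weights $-\alpha_1,\ldots,-\alpha_\ell$, whose kernels intersect exactly in $\g'$, gives $\cS(\g)_{\rm si}\subset\cS(\g')$ in two lines. (The cited result is itself elementary here: $\{f_i,g\}=-f_i\,D_{\alpha_i}g$ for any semi-invariant $g$, where $D_{\alpha_i}$ is the constant-coefficient derivation in the direction $\alpha_i\in\g^*$, so Poisson-commutativity of the semicentre forces $D_{\alpha_i}g=0$.) You instead prove the stronger inclusion $\cS(\g)^{\g'}\subset\cS(\g')$ geometrically, by showing that a generic $(\tilde G')^{\circ}$-orbit meets each fibre of $\g^*\to(\g')^*$ in a dense subset; the dimension count $\dim\g'_{p(x)}-\dim\g'_x\ge 2\ell-\ell=\ell$ is right, and your semicontinuity and density arguments go through. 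The cost is that you must front-load the identity $\ind\g'=2\ell$, which the paper only establishes afterwards in Lemma~\ref{first} (by the same Ra\"{\i}s computation you give, using that $\gt n_e$ is Abelian of dimension $\ell$ for $e$ regular nilpotent); the benefit is a self-contained argument that also exhibits $\g'$ as the canonical truncation of $\g$ directly, via the orbit geometry rather than via the existence of the proper semi-invariants $f_i$. One cosmetic remark: the symbol $\alpha_2$ is used in the paper for a simple root, so naming a point of $(\g')^*$ by it invites confusion and should be changed.
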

\begin{proof}
Suppose that $\cS(\g)_{\lambda}$ is an eigenspace of $\g$ 
corresponding to a character $\lambda\in\g^*$ and  $\cS(\g)_{\lambda}\ne 0$.
Let $\g^\lambda\subset\g$ be the kernel of $\lambda$. 
Then, by a result of Borho \cite[Satz~6.1]{Bln}, 
$\cS(\g)_{\rm si}\subset \cS(\g^\lambda)$ (see also 
\cite[Lemme~4.1]{rv} or \cite[Sect.~1.2]{jsh}). 
Since $f_1,\ldots,f_\ell\in \gt n^{-}\subset\g$ are 
semi-invariants of the weights $-\alpha_i$ ($1\le i \le \ell$), 
we conclude that $\cS(\g)_{\rm si} \subset \cS(\g')$.
Next, $\g'$ has no non-trivial characters and the action of 
$\gt h$ on $Z\cS(\g')$ is diagonalisable. 
Thus indeed $\cS(\g)_{\rm si} = Z\cS(\g')$.
\end{proof}

Lemma~\ref{0} shows that $\g'$ is the canonical truncation of $\g$ in the following sense. 
For any algebraic finite-dimensional Lie algebra $\gt q$, there exists a unique subalgebra 
$\gt a$ such that $\cS(\gt q)_{\rm si}=Z\cS(\gt a)$ \cite{Bln}. This $\gt a$ is said to be the truncation of $\gt q$.

%%%Now it suffices to show that $H_i$ generate $Z\cS(\g')$. 

\begin{lm}\label{first}
Let $H_i$ be as above. Then 
the polynomials $H_i$ are algebraically independent. 
Besides, $\ind\g'=2\ell$.
\end{lm}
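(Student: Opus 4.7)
The plan is to establish both assertions by sandwiching the transcendence degree of $\mK[H_1,\ldots,H_{2\ell}]$ between $2\ell$ and $\ind\g'$. By Lemma~\ref{0} we have $H_i \in Z\cS(\g')$, while in general $\mathrm{tr.\,deg}\,Z\cS(\g') \le \ind\g'$. It therefore suffices to prove (a) $\ind\g' \le 2\ell$ and (b) $H_1,\ldots,H_{2\ell}$ are algebraically independent; both statements then hold with equality.

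For (a), apply Lemma~\ref{stab} to $\g' = \gt n \ltimes \gt n^-$ at a point $x = a + b$ with $a \in \gt n^*$ generic and $b \in (\gt n^-)^*$ corresponding, via the Killing form, to a principal nilpotent $\beta \in \gt n$. A direct computation identifies the coadjoint orbit $\gt n \cdot b$ with $[\gt n, \beta] \subset \gt n$, which has codimension $\ell$; hence $\gt a = \Ann(\gt n \cdot b) \subset \gt n^-$ has dimension $\ell$ (and in fact $\gt a = \mK f_1 \oplus \cdots \oplus \mK f_\ell$). Moreover $\gt n_b = \gt g_\beta$ is Abelian of dimension $\ell$ for $\beta$ principal, so $(\gt n_b)_{\tilde a} = \gt n_b$ regardless of $\tilde a$. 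Lemma~\ref{stab} then gives $\dim \g'_x = \dim \gt a + \dim \gt n_b = 2\ell$, and since this is attained on a non-empty open subset of $\g'^*$, we obtain $\ind\g' \le 2\ell$.

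For (b), observe that $\{H_1,\ldots,H_{2\ell}\}$ and $\{F_1^\bullet,\ldots,F_\ell^\bullet, f_1,\ldots,f_\ell\}$ both have $2\ell$ elements and generate the same subfield of $\mathrm{Frac}\,\cS(\g)$, since $F_\ell^\bullet = c\, e_\delta \prod_j f_j^{r_j}$ with $c \in \mK^{\times}$. Hence it suffices to show that $F_1^\bullet,\ldots,F_\ell^\bullet, f_1,\ldots,f_\ell$ are algebraically independent. Suppose, for contradiction, that $R(F_1^\bullet,\ldots,F_\ell^\bullet, f_1,\ldots,f_\ell) = 0$ in $\cS(\g)$ for some nonzero $R = \sum_{\bar n} R_{\bar n}(X)\, Y^{\bar n} \in \mK[X_1,\ldots,X_\ell, Y_1,\ldots,Y_\ell]$. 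The Cartan $\gt h \subset \gt b \subset \g$ acts on $\cS(\g)$ by the contracted bracket, which on $\gt h \oplus \gt n^-$ agrees with the bracket of $\gt g$; hence each $F_i^\bullet$ has $\gt h$-weight $0$ (being $\g$-invariant) while each $f_j$ has weight $-\alpha_j$. Since $\alpha_1,\ldots,\alpha_\ell$ are $\Z$-linearly independent, the monomials $f^{\bar n}$ lie in pairwise distinct $\gt h$-weight spaces of $\cS(\g)$, so the relation decomposes into $R_{\bar n}(F_i^\bullet)\, f^{\bar n} = 0$ separately for each $\bar n$. Since $f^{\bar n}$ is a nonzerodivisor in $\cS(\g)$, this yields $R_{\bar n}(F_i^\bullet) = 0$, and the algebraic independence of $F_1^\bullet,\ldots,F_\ell^\bullet$ established in Lemma~\ref{feig-kost} then gives $R_{\bar n} = 0$ for every $\bar n$; so $R = 0$, a contradiction.

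The only delicate point I foresee is the verification that $\gt h$ acts on $\cS(\g)$ making the $f_j$ weight vectors of weight $-\alpha_j$. This follows from the contraction formulae in Section~\ref{sec:contr}: for $h \in \gt h$ and $v \in \gt n^-$ one has $[h,v]_{\gt g} \in \gt n^-$, so $[h,v]_{\g} = \mathrm{pr}_{\gt n^-}[h,v]_{\gt g} = [h,v]_{\gt g}$. With this in place, (a) and (b) combine to yield $\ind\g' = 2\ell$ together with the algebraic independence of $H_1,\ldots,H_{2\ell}$.
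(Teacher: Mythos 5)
Your argument is correct, and it diverges from the paper's proof at both steps, so a comparison is worth recording. The paper establishes the algebraic independence by a Jacobian computation at one explicit point: for $\hat e\in\gt n^{\rm ab}$ coming from a regular nilpotent $e\in\gt n$ one has $d_{\hat e}F_i^\bullet=d_eF_i$, and $d_eF_1,\ldots,d_eF_{\ell-1}$ together with $e_\delta$ span the $\ell$-dimensional space $\gt n_e\subset\gt b$, while $f_1,\ldots,f_\ell$ are linearly independent vectors of $\gt n^-$; hence all $2\ell$ differentials $d_{\hat e}H_i$ are linearly independent. The index is then read off from Ra\"{\i}s's formula $\ind\g'=(\dim\gt n^{\rm ab}-\dim U\hat e)+\ind\gt n_{\hat e}=2\ell$ at that same point (or, alternatively, from the Ooms--Van den Bergh formula for truncated Lie algebras). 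You instead prove independence purely algebraically, by splitting a putative relation into $\gt h$-weight components and reducing to the independence of $F_1^\bullet,\ldots,F_\ell^\bullet$ already known from Lemma~\ref{feig-kost} (your reduction from the $H_i$ to the set $\{F_1^\bullet,\ldots,F_\ell^\bullet,f_1,\ldots,f_\ell\}$ via the fraction field is legitimate, since two sets of equal cardinality generating the same subfield are simultaneously algebraically independent); and you avoid quoting Ra\"{\i}s's theorem by extracting only the upper bound $\ind\g'\le 2\ell$ from Lemma~\ref{stab} at a point whose $(\gt n^-)^*$-component corresponds to a principal nilpotent, closing the gap with $2\ell\le\mathrm{tr.\,deg}\,Z\cS(\g')\le\ind\g'$. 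Your route is self-contained relative to the tools already in the paper and does not require finding a single point at which all $2\ell$ differentials are simultaneously independent; the paper's route is shorter and produces an explicit regular point of $(\g')^*$, in the spirit of the differential computations reused in Theorem~\ref{fund-f}. Both are sound.
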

\begin{proof}
Let $\hat e\subset\gt n^{\rm ab}$ be a linear function coming from a regular nilpotent 
element $e\in\gt n$. We extend it to a function on $\g^*$ by setting $\hat e(\gt b)=0$. 
Then $d_{\hat e}F_i^\bullet=d_e F_i$. Moreover, $d_e F_{\ell}=e_\delta$ up to a constant. 
Therefore $d_e F_1,\ldots,d_eF_{\ell-1}$ and $e_\delta=de_\delta$ generate $\gt n_e$, 
a subspace of dimension $\ell$. The other polynomials $H_j$ ($\ell\le j<2\ell$) are linear independent elements of 
$\gt n^-$. Thus $d_{\hat e}H_i$ are linear independent and  the first statement is proved. 

According to the index formula of Ra{\"i}s \cite{r} (cf. Lemma~\ref{stab}), 
$$
\ind\g'=(\dim\gt n^{\rm ab}-\dim U\hat e)+\ind\gt n_{\hat e}=2\ell.
$$
%%This concludes the proof. 
Alternatively, one can use \cite[Lemma~3.7.]{oomsb}, which calculates the index of a truncated Lie algebra.
In our case it reads $\ind\g'=\ind\g+(\dim\g-\dim\g')$. 
\end{proof}

\begin{thm}\label{semi-f}
Let $\g$ be Feigin's contraction of $\gt g$. Then 
$\cS(\g)_{\rm si}$ is generated by the polynomials $H_i$ defined above.  
\end{thm}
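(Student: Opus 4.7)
The plan is to reduce the statement to a Jacobian codimension estimate and then apply the characteristic-zero version of Skryabin's theorem, exactly as was done in Theorem~\ref{contr-deg}({\sf iii}). By Lemma~\ref{0}, $\cS(\g)_{\rm si}=Z\cS(\g')$. Each $H_i$ is a $\g$-semi-invariant, so lies in $\cS(\g)_{\rm si}=Z\cS(\g')$. By Lemma~\ref{first}, the polynomials $H_1,\ldots,H_{2\ell}$ are algebraically independent and $\ind\g'=2\ell$. Since $\mathrm{tr.\,deg}_{\mK} Z\cS(\g')\le\ind\g'=2\ell$, the $H_i$ form a transcendence basis of $Z\cS(\g')$, so every element of $Z\cS(\g')$ is algebraically dependent on them. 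Each $H_i$ is homogeneous, so \cite[Theorem~1.1]{ppy} reduces the proof to the estimate
$$
\codim_{(\g')^*} \cj(H_1,\ldots,H_{2\ell})\ge 2.
$$

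For this estimate I would mimic the argument of Theorem~\ref{fund-f}. Unpacking the definitions gives
$$
\frac{dH_1\wedge\cdots\wedge dH_{2\ell}}{\omega'}=\frac{dF_1^\bullet\wedge\cdots\wedge dF_{\ell-1}^\bullet\wedge df_1\wedge\cdots\wedge df_\ell\wedge de_\delta}{\omega'},
$$
where $df_1,\ldots,df_\ell,de_\delta$ are constant $1$-forms corresponding to fixed, linearly independent vectors of $\g'$. Thus the wedge vanishes at $x\in(\g')^*$ precisely when $d_xF_1^\bullet,\ldots,d_xF_{\ell-1}^\bullet$ fail to remain linearly independent modulo the $(\ell+1)$-dimensional subspace $\langle f_1,\ldots,f_\ell,e_\delta\rangle\subset\g'$. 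The aim is then to rule out every codimension-$1$ component of the locus so defined.

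Non-vanishing away from codimension $2$ can be verified at test points along the same lines as in the proof of Theorem~\ref{fund-f}. At a point $\hat e\in\gt n_{\rm reg}^{\rm ab}\subset(\g')^*$ coming from a regular nilpotent $e\in\gt n$, one has $d_eF_1,\ldots,d_eF_{\ell-1}\in\gt n_e$, and by \cite[Sect.\,8.3,\,Lem.\,1]{slodowy} / \cite[Theorem~10.6]{Dima07} these together with $de_\delta=e_\delta$ span $\gt n_e$; together with $f_1,\ldots,f_\ell$, the full $2\ell$ directions are then present, so $(\g')^*\setminus\cj(H_1,\ldots,H_{2\ell})$ is non-empty. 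The argument behind Theorem~\ref{fund-f}, adapted by picking $x=b+e$ with $e\in\mathcal O^{\rm sub}\cap\gt D_i$ and $(f_j,e)\ne 0$ for $j\ne i$, should handle each $\gt b^*\times\gt D_i$.

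The main obstacle is precisely this last codimension-$2$ bookkeeping on $(\g')^*$. The subtlety is that $(\g')^*_{\rm sing}$ need not coincide with the image of $\g^*_{\rm sing}$ under $\g^*\to(\g')^*$ (the latter kills $\gt h^*$), so besides the $\gt D_i$-components inherited from Theorem~\ref{fund-f} one must also treat divisorial components of $\cj(H_1,\ldots,H_{2\ell})$ that emerge from the restriction to $(\g')^*$, by a direct evaluation at a well-chosen point on each such divisor.
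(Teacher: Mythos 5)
Your reduction is the right one and matches the paper's: by Lemma~\ref{0} it suffices to show that $H_1,\ldots,H_{2\ell}$ generate $Z\cS(\g')$, Lemma~\ref{first} gives their algebraic independence and $\ind\g'=2\ell$, and Skryabin's theorem (\cite[Theorem~1.1]{ppy}) finishes once one knows $\codim\cj(H_1,\ldots,H_{2\ell})\ge 2$ in $(\g')^*$. But that codimension estimate is the entire content of the theorem, and you explicitly leave it as ``the main obstacle'': you correctly observe that the singular set of $\tilde\pi'$ is not controlled by $\g^*_{\rm sing}$ and that new divisorial components could appear, yet you offer no way to enumerate them, so the proposed evaluation at a well-chosen point on each such divisor cannot even get started. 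Moreover, even on the components you can name, mimicking Theorem~\ref{fund-f} requires more than is proved there: at a subregular point one would now need $d_xF_1^\bullet,\ldots,d_xF_{\ell-1}^\bullet$ to remain independent modulo the $(\ell+1)$-dimensional span of $f_1,\ldots,f_\ell,e_\delta$, in particular that $e_\delta$ does not fall into the $(\ell-1)$-dimensional span of the $\gt b$-parts $d_eF_j$ --- a verification not supplied by the earlier proof.

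The paper closes this gap without any pointwise case analysis, by a degree argument. Writing $\tilde\pi=\tilde\pi'+R_{\gt h}$ with $R_{\gt h}$ the part involving the Cartan subalgebra, one gets $\Lambda^{(n-\ell)/2}\tilde\pi=\bigl(\Lambda^{(n-3\ell)/2}\tilde\pi'\bigr)\wedge\bigl(\Lambda^{\ell}R_{\gt h}\bigr)$, so by Theorem~\ref{fund-f} the fundamental semi-invariant of $\g'$ divides $p=\prod f_i^{r_i-1}$. Applying Lemma~\ref{sleva-sprava} to $\cS(\g')$ gives coprime $q_1,q_2$ with $q_1(\cH/\omega')=q_2\Lambda^{(n-3\ell)/2}\tilde\pi'$, whence $q_1$ divides $p$; comparing degrees on both sides (using $\sum\deg F_i=(n+\ell)/2$ and $\deg F_\ell=1+\sum r_i$) forces $q_2\in\mK$ and $q_1=p$ up to scalar. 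Hence $\cH/\omega'$ is a scalar multiple of the codimension-$\ge 2$ factor of $\Lambda^{(n-3\ell)/2}\tilde\pi'$, which is exactly the Jacobian estimate you need. You should replace the test-point plan by this computation.
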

\begin{proof}
%% $$
%% \frac{dF_1^\bullet\wedge\ldots\wedge dF_\ell^\bullet}{\omega}=\Lambda^{(n-\ell)/2}\tilde\pi.
%%% $$
First, we extract the ``$\gt h$-part'' out of $\tilde\pi$ and its powers. Let 
$\tilde\pi'$
be the Poisson tensor of $\g'$ and $\omega'$ a volume form 
on $(\g')^*$. %% and $\gt h$ such that $\omega=\omega_{\gt n}\wedge\omega_{\gt h}$.
Then $\tilde\pi=\tilde\pi'+R_{\gt h}$, where 
$R_{\gt h}$ is a sum $\sum [h_i,y_j]\partial_{h_i}\wedge\partial_{y_j}$ over a basis 
$h_1,\ldots,h_\ell$ of $\gt h$ and a basis of $\g'$. 
Since $\dim\gt h=\ell$, for $k>\ell$, we have $\Lambda^k R_{\gt h}=0$. 
Taking into account that also $\Lambda^k \tilde\pi'=0$ for $k>(n-3\ell)/2$ (Lemma~\ref{first}),
we get the equality
$$
\Lambda^{(n-\ell)/2}\tilde\pi=\left(\Lambda^{(n-3\ell)/2}\tilde\pi'\right)\wedge\left( \Lambda^{\ell} R_{\gt h}\right).
$$  
Therefore a fundamental semi-invariant of $\g'$ is a divisor of 
$p=\prod f_i^{r_i-1}$. 
%%% Now looking on the left hand side, $dF_{\ell}^\bullet=p L$, as in the proof of 
%%%Theorem~\ref{fund-f}.  ....... difficult 

Set $\cH:=dH_1\wedge\ldots\wedge dH_{2\ell}$. By Lemma~\ref{first}, $\cH\ne 0$. 
Note also that by the same lemma, $\ind\g'=2\ell$.
Therefore, 
applying Lemma~\ref{sleva-sprava} to $\cS(\g')$, we get non-zero coprime $q_1,q_2\in\cS(\g')$
such that 
$$
q_1(\cH/\omega')=q_2\Lambda^{(n-3\ell)/2}\tilde\pi'.
$$
Since the polynomials $q_1$ and $q_2$ are coprime, $q_1$ must be a divisor of $p$ as well.
In particular, $\deg q_1\le \deg p$. 
Next we compute and sum the degrees of all objects involved in the equality 
%% Lemma~\ref{feig-kost} provides an equality $\deg\cF=(n-\ell)/2$ for 
%% $\cF=dF_1^\bullet\wedge\ldots\wedge dF_\ell^\bullet$ as in the proof of Theorem~\ref{fund-f}.
%% Taking it into account, one obtains
$$
\begin{array}{l}
\deg q_2+ (n-3\ell)/2=\deg q_1+ \deg\cH\le \deg p +\left(\sum\limits_{i=1}^{\ell-1}\deg F_i - \ell +1\right) = \\
\qquad \deg p+ ((n+\ell)/2-\deg F_{\ell}-\ell+1)= 
%% \deg\cF - (\deg F_\ell -\deg p) + 1=  
%% (n-\ell)/2 - (\ell+1) + 1=
(n+\ell)/2-(\ell+1)-\ell+1=(n-3\ell)/2\,.
\end{array}
$$
This is possible only if $q_2\in\mK$ and $q_1=p$ (up to a scalar multiple). 
Thus $p(\cH/\omega')=a\Lambda^{(n-3\ell)/2}\tilde\pi'$
with $a\in\mK^{^\times}$. Moreover, $p$ is a fundamental 
semi-invariant of $\tilde\pi'$ and therefore the Jacobian locus $\cj(H_1,\ldots,H_{2\ell})$ 
of $H_i$ 
does not contain divisors.  We have 
$\dim\cj(H_1,\ldots,H_{2\ell})\le \dim\g-2$ and 
all polynomials $H_i$ are homogeneous. This allows us  to use the characteristic zero version of 
a result of Skryabin, see \cite[Theorem~1.1]{ppy}, stating that  here any $H\in \cS(\g')$ that is 
algebraic over a subalgebra generated by $H_i$ is contained in that subalgebra. 
Since $\mathrm{tr.\,deg}\,Z\cS(\g')\le 2\ell$, we conclude that 
$Z\cS(\g')$ is generated by $H_1,\ldots,H_{2\ell}$. 
Now the result follows from Lemma~\ref{0}.
\end{proof}

\subsection{Subregular orbital varieties}\label{orbital}
Irreducible components of $\overline{{\mathcal O}^{\rm sub}\cap\gt n}$ 
are called {\it subregular orbital varieties}. They have played a major r{\^o}le
in the proof of Theorem~\ref{fund-f} and we know that each of them is a linear 
space $\gt D_i$. Every $\gt D_i$ is also the nilpotent radical of a minimal parabolic 
subalgebra $\gt p_{\alpha_i}$. An interesting question is whether $B$ acts on $\gt D_i$ with an open orbit. 
This problem was addressed and solved in \cite{g-co}. 
As it turns out, our results complement and simplify some of the arguments 
in \cite{g-co}.

Let $\tilde G=B\ltimes\exp(\gt n^{-})$ be an algebraic group 
with $\Lie\tilde G=\g$. Let also $\mathcal{O}_i$ be an irreducible component of 
$\mathcal{O}^{\rm sub}\cap\gt n$ lying in $\gt D_i$. Note that $\mathcal{O}_i$ is a dense open subset 
of $\gt D_i$.

\begin{lm}\label{good-Di}
Suppose that $r_i=[\delta:\alpha_i]=1$. Then there is a dense open 
$B$-orbit in $\gt D_i$.
\end{lm}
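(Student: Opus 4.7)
The plan is to reformulate the lemma as a stabiliser condition, reduce it via Richardson's density theorem, and then exploit the hypothesis $r_i=1$ through an explicit calculation. First, $B$ acts with a dense orbit on $\gt D_i$ if and only if some $e\in\gt D_i$ satisfies $\dim Be=\dim\gt D_i=\dim\gt n-1$, equivalently $\dim\gt b_e=\ell+1$. Since $\mathcal{O}_i$ is dense in $\gt D_i$ and $\mathcal{O}_i\subset\mathcal{O}^{\rm sub}$, any such generic $e$ is subregular with $\dim\gt g_e=\ell+2$, so $\dim\gt b_e\in\{\ell+1,\ell+2\}$. The claim is therefore equivalent to showing that $\gt g_e\not\subset\gt b$ for a generic $e$ in the open part of $\gt D_i$.

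Next, I would apply Richardson's density theorem to the minimal parabolic $P_{\alpha_i}$, whose nilpotent radical is exactly $\gt D_i$: there is a dense $P_{\alpha_i}$-orbit in $\gt D_i$, and for $e$ in this orbit a dimension count gives
$$\dim(\gt p_{\alpha_i})_e=\dim\gt p_{\alpha_i}-\dim\gt D_i=\ell+2=\dim\gt g_e.$$
Since $(\gt p_{\alpha_i})_e\subset\gt g_e$, the equality of dimensions forces $\gt g_e=(\gt p_{\alpha_i})_e\subset\gt p_{\alpha_i}=\gt b\oplus\mK f_i$. The problem thus reduces to showing that $\gt g_e$ has nonzero component along $\mK f_i$, i.e.\ that some element of the form $f_i+b$ with $b\in\gt b$ lies in $\gt g_e$.

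To produce such an element I would write $e$ explicitly as a sum of positive root vectors in $\gt D_i$ making it subregular -- for instance a variant of the principal nilpotent $\sum_{j\ne i}e_{\alpha_j}$ augmented by a suitable higher root vector from $\gt D_i$ -- and then solve the linear equation $[b,e]=-[f_i,e]$ for $b\in\gt b$. Note that the right-hand side $[f_i,e]$ automatically lies in $\gt D_i$: for $\beta\in\Delta^+\setminus\{\alpha_i\}$ the difference $\beta-\alpha_i$ cannot be a negative root ($\alpha_i$ being simple), and $2\alpha_i$ is never a root, so $[f_i,e_\beta]$ is either zero or a nonzero multiple of $e_{\beta-\alpha_i}\in\gt D_i$. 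The hypothesis $r_i=1$ enters essentially at this point: it bounds the $\alpha_i$-strings in $\Delta^+$ and in particular controls the $\mathrm{ad}(f_i)$-action on the highest-root component of $e$, so that the resulting structure-constant equations become consistent. The main obstacle, and the place where $r_i=1$ is indispensable, is precisely this final consistency check; it likely requires either a case-by-case analysis based on the classification of pairs $(\gt g,\alpha_i)$ with $r_i=1$, or a more conceptual argument via the $\gt{sl}_2(\alpha_i)$-module decomposition of $\gt D_i$ induced by the Levi of $\gt p_{\alpha_i}$.
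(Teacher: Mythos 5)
There is a genuine gap. Your reduction is sound as far as it goes: the Richardson argument for the minimal parabolic $\gt p_{\alpha_i}$ correctly shows that a generic $e\in\gt D_i$ is subregular with $\gt g_e=(\gt p_{\alpha_i})_e\subset\gt b\oplus\mK f_i$, and the lemma is indeed equivalent to $\gt g_e\not\subset\gt b$. But that last implication \emph{is} the lemma, and your proposal does not prove it. Solving $[b,e]=-[f_i,e]$ for $b\in\gt b$ amounts to checking $[f_i,e]\in[\gt b,e]$, and $[\gt b,e]$ is a proper subspace of $\gt D_i$ precisely when the $B$-orbit fails to be dense --- so the ``consistency check'' you defer is not a technicality but the entire content of the statement, and the hypothesis $r_i=1$ is never actually brought to bear. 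The observation that $[f_i,e]\in\gt D_i$ does not help here, and the proposed fallback (``case-by-case analysis'' or ``a more conceptual argument'') is a placeholder, not a proof.

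The paper closes exactly this gap by a different mechanism, using the contracted algebra $\g=\gt b\ltimes\gt n^-$ rather than working inside $\gt g$. By Theorem~\ref{fund-f} the fundamental semi-invariant of $\g$ is $\prod_j f_j^{r_j-1}$; when $r_i=1$ the factor $f_i$ is absent, so the divisor $\gt b^*{\times}\gt D_i\subset\g^*$ meets $\g^*_{\rm reg}$ and one may pick a regular $x$ with $\gt n^{\rm ab}$-component $\hat e$ coming from a subregular $e\in\mathcal{O}_i$. The Ra\"is-type formula \eqref{eq-ind} gives $\dim\g-\dim\tilde Gx=\dim\gt b_e-\ell+\ind\gt b_e$, and if $Be$ were not dense one would have $\dim\gt b_e\ge\ell+2$, hence $\gt b_e=\gt g_e$ and, by Vinberg's inequality $\ind\gt g_e\ge\ell$, the codimension of $\tilde Gx$ would be at least $\ell+2$, contradicting regularity of $x$. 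If you want to salvage your approach, you would need to supply the missing implication directly --- e.g.\ via the $\gt{sl}_2(\alpha_i)$-structure of $\gt D_i$ or a classification of the pairs $(\gt g,\alpha_i)$ with $r_i=1$ --- whereas the paper's route gets it uniformly from the semi-invariant computation.
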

\begin{proof}
According to Theorem~\ref{fund-f}, if $r_i=1$, then the intersection 
$(\gt b^*{\times}\gt D_i)\cap\g^*_{\rm reg}$ is non-empty, and hence it is 
a non-empty open subset of $\gt b^*{\times}\gt D_i$. 
Thereby there is a regular $x$ in $\gt b^*{\times}\mathcal{O}_i$. 
Let $\hat e$ be the $\gt n^{\rm ab}$-component of this $x$. In other words,  
$x\in \gt b^*{\times}\{\hat e\}$, where $\hat e\in\gt n^{\rm ab}$ comes from a subregular nilpotent element  $e\in \gt n$.
Next we compute the codimension of $\tilde Gx$.  
%%% Note that $\gt b\ltimes\gt n^-_i$ .... define $\gt n^-_i$ if that helps, 
%%% is a Lie subalgebra of $\g$ and 
%%% for a generic $x$ is we have, .... codimension is the index of this new algebra $+1$.
%% ({\bf still, needs an explanation!}) 
This can be done in the spirit of the Ra\"is formula for the index of a semi-direct product 
\cite{r}, see also 
\cite[Proposition~5.5]{Dima07} and Lemma~\ref{stab}. And the result is that 
%% By the ({\bf reference! explanation! Rais or whatever ...}), it is given by 
\begin{equation}\label{eq-ind}
\dim\g-\dim\tilde Gx=\dim\gt n-(\dim\gt b-\dim\gt b_e)+\ind \gt b_e=\dim\gt b_e-\ell+\ind\gt b_e\,.
\end{equation}

Suppose that $Be$ is not dense in $\gt D_i$. Then $\dim Be<(\dim\gt n-1)$ and 
therefore $\dim\gt b_e\ge\ell+2$ implying $\gt b_e=\gt g_e$. 
Since $\ind\gt g_e\ge\ell$ by Vinberg's inequality, \cite[Corollary~1.7]{dima2}, we get 
that $\dim\g-\dim\tilde Gx\ge (\ell+2)$ and therefore $x\in\g^*_{\rm sing}$.
This contradiction proves the lemma.
\end{proof}  

In type $A$ all $r_i$ are equal to $1$ and all components $\gt D_i$ have open $B$-orbits. This result was 
first obtained by J.A.\,Vargas \cite{var}. 

\begin{ex}\label{type-D}
Suppose that $\gt g=\gt{so}_{2\ell}$ with $\ell>3$ and $e\in\gt g$ is a subregular nilpotent element.
Then $e$ is given by a partition $(2\ell-3,3)$, odd powers $e^{2k+1}$ of 
the underlying matrix are  elements of $\gt g$, 
and 
$\gt g_{e}$ has a basis 
$$
e,e^3,\ldots,e^{2\ell-5},\xi_1,\xi_2, \xi_3,\eta
$$ 
with the non-trivial commutators:
$[\xi_1,\xi_2]=e^{2n-5}$, $[\xi_1,\eta]=\xi_2$, and $[\xi_2,\eta]=\xi_3$.
(The structure of $\gt g_e$ is described, for example, in \cite[Section~1]{surpr}.) 
It is not difficult to see that $\gt g_e$ does not contain a commutative subalgebra 
of codimension $1$. Coming back to  equation~\eqref{eq-ind}, 
we get that in case $\dim\gt b_e=\ell+1$, 
the stabiliser $\gt b_e$ is not commutative and  thereby $\ind\gt b_e\le\ell-1$. 
As a consequence, $\dim\tilde Gx\ge \dim\g - \ell$. 
Thus in type $D$ there is an open $B$-orbit in $\gt D_i$ if and only if 
$r_i=1$.  
\end{ex}
 
Calculating centralisers $\gt g_e$ of  subregular nilpotent elements
in type $E$, on GAP or by hand, 
one can show that $\gt g_e$  does not contain 
an Abelian subalgebra of codimension $1$. 
Together with  Theorem~\ref{fund-f} and equation~\eqref{eq-ind},
this fact provides an additional explanation for 
\cite[Theorem~2.4(a)(i)]{g-co}. That result states that 
for $\gt g$ simply laced, $\gt D_i$ contains an open $B$-orbit if and only if $r_i=1$.

\begin{rmk}
Actually,  Theorem~2.4 of \cite{g-co} asserts that there is a finite number 
of $B$-orbits in $\mathcal{O}_i$, if $r_i=1$. As is explained in the Introduction of 
\cite{g-co}, this is equivalent to the existence of an open orbit. Note also that 
\cite{g-co} proves the existence of an open $B$-orbit by giving its representative in 
each particular case. Besides, results for the exceptional Lie algebras rely on 
GAP calculations of S.\,Goodwin \cite{sg}. 
\end{rmk}

 \vskip1ex
 
There is another 
interesting related question, asked by D.\,Panyushev. When is the 
stabiliser $B_e$ of a generic $e\in\gt D_i$ 
Abelian? The centraliser of a nilpotent element is Abelian only when the 
element is regular, for a conceptual proof of this fact see 
\cite[Theorem~3.3]{dima2}. In particular, 
$\gt g_e$ is not Abelian for a subregular nilpotent element $e$. 
This implies that $\gt b_e$ can be Abelian only if 
$\dim\gt b_e<\ell+2$ and there is a dense $B$-orbit in $\gt D_i$.
On the other hand, for an Abelian Lie algebra, $\ind\gt b_e=\dim\gt b_e=\ell+1$ and 
therefore $r_i$ must be larger than $1$. %%As was already mentioned, 
In  the simply laced case $\gt g_e$ does not contain Abelian
subalgebras of codimension $1$. For the remaining Lie algebras, 
\cite[Theorem~2.4.(a)(ii)]{g-co} provides the following answer. 

\begin{prop}\label{Ab}
The stabiliser in $B$ of a generic $e\in\gt D_i$ is  
Abelian if and only if 
\begin{itemize}
\item $\gt g$ is of type $B_{\ell}$ and $i>1$, or
\item $\gt g$ is of type $C_{\ell}$ with $\ell>1$ and $i=1$, or
\item $\gt g$ is of type $F_4$ and $i=4$, or
\item $\gt g$ is of type $G_2$ and $i=2$,
\end{itemize}
in the Vinberg-Onishchik numbering of simple roots \cite{VO}.
\end{prop}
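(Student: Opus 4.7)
The plan is to combine the necessary conditions already extracted in the paragraph preceding the proposition with a finite case-by-case verification in the non-simply-laced types.

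First, I would assemble the reductions obtained above. If $\gt b_e$ is Abelian for a generic $e\in\gt D_i$, then $\gt b_e\subsetneq\gt g_e$ (since $\gt g_e$ is non-Abelian for subregular $e$) and $\dim\gt g_e=\ell+2$ force $\dim\gt b_e\le\ell+1$. Hence $\dim Be\ge \dim\gt n-1=\dim\gt D_i$, so $Be$ is dense in $\gt D_i$ and $\dim\gt b_e=\ell+1$. Abelianness then gives $\ind\gt b_e=\ell+1$, and equation~\eqref{eq-ind} yields $\dim\g_x=\ell+2$ for generic $x\in\gt b^*{\times}\mathcal{O}_i$, placing $\gt b^*{\times}\gt D_i$ inside $\g^*_{\rm sing}$; by Theorem~\ref{fund-f} this forces $r_i>1$. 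Finally, the remark recorded in the paper that in the simply-laced types $\gt g_e$ contains no Abelian subalgebra of codimension $1$ rules out types $A$, $D$, $E$ entirely.

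Second, I would restrict to the remaining candidates $(\gt g,i)$ with $r_i>1$ in types $B_\ell$, $C_\ell$, $F_4$, $G_2$ and verify each by a direct computation. Choose a representative $e=e(i)\in\gt D_i\cap\mathcal{O}^{\rm sub}$ in a Chevalley basis satisfying $(f_j,e)\ne 0$ for $j\ne i$, as in the proof of Theorem~\ref{fund-f}; compute $\gt g_e$ via its root-vector decomposition, intersect with $\gt b$, and examine the brackets inside $\gt b_e$. For types $B_\ell$ and $C_\ell$ one has a clean matrix realisation determined by the Jordan structure of a subregular element, so the calculation is uniform: in type $B_\ell$ one checks Abelianness for each $i>1$ (all of which have $r_i=2$), while in type $C_\ell$ one shows that only $i=1$ yields an Abelian $\gt b_e$, producing an explicit non-zero commutator inside $\gt b_e$ for $1<i<\ell$. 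In $F_4$ and $G_2$ only a few indices survive, and each is handled by a direct Chevalley-basis computation: Abelianness is verified at $i=4$ in $F_4$ and $i=2$ in $G_2$, and a non-trivial commutator exhibited in the other cases.

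The main obstacle is the lack of a uniform conceptual explanation for the asymmetric outcome in types $C_\ell$, $F_4$, $G_2$, where some indices with $r_i>1$ produce Abelian stabilisers and others do not. The reductions above are uniform, but the final classification is genuinely case-by-case; as the paper indicates, \cite[Theorem~2.4.(a)(ii)]{g-co} of Goodwin provides this last step, with the exceptional cases confirmed on GAP. My proposal is therefore to carry out the uniform reductions above to shrink the problem to a short finite list, and then either quote \cite[Theorem~2.4.(a)(ii)]{g-co} or finish by direct Chevalley-basis bookkeeping in each residual case.
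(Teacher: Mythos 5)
Your proposal is correct and follows essentially the same route as the paper: the uniform reductions (non-Abelianness of $\gt g_e$ for subregular $e$, the dimension and index constraints via equation~\eqref{eq-ind} and Theorem~\ref{fund-f} forcing $r_i>1$, and the exclusion of the simply-laced types) are exactly those recorded in the paragraph preceding the proposition, after which the paper, like you, settles the remaining non-simply-laced cases by invoking \cite[Theorem~2.4.(a)(ii)]{g-co}.
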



\begin{thebibliography}{ARG}

\bibitem[BGR]{Bln}
{\sc W.\,Borho, P.\,Gabriel}, and {\sc R.\,Rentschler}, 
``Prim\-ideale in Ein\-h\"ullenden aufl\"osbarer Lie-Alge\-bren (Beschreibung durch Bahnenr\"aume)'',
Lecture Notes in Math., {\bf 357}, Springer-Verlag, Berlin, 1973.

\bibitem[F10]{feigin1}
{\sc E.\,Feigin}, ${\mathbb G}_a^M$-degeneration of flag varieties,
{\tt  arXiv:1007.0646}.

\bibitem[F11]{feigin2}
{\sc E.\,Feigin}, Degenerate flag varieties and the median Genocchi numbers,
{\tt   arXiv:1101.1898}.

\bibitem[FFiL]{fmp}
{\sc E.\,Feigin}, {\sc M.\,Finkelberg}, and {\sc P.\,Littelmann},
Symplectic degenerate flag varieties, \\
{\tt arXiv:1106.1399}.

\bibitem[FFL]{ffp} 
{\sc E.\,Feigin}, {\sc G.\,Fourier}, and {\sc P.\,Littelmann},
PBW filtration and bases for irreducible modules in type $A_n$,
{\it Transformation Groups}, {\bf 16}\,(2011), no.\,1, 71--89.

\bibitem[G]{sg}
{\sc S.M.\,Goodwin},
Algorithmic testing for dense orbits of Borel subgroups, 
{\it J. Pure Appl. Algebra}, {\bf 179}\,(2005), 171--181.

\bibitem[GHR]{g-co}
{\sc S.M.\,Goodwin}, {\sc L.\,Hille}, and {\sc G.\,R{\"o}hrle},
The orbit structure of {D}ynkin curves,
{\it Math. Z.}, {\bf 257}\,(2007), no.\,2, 439--451.

\bibitem[H]{Helg}
{\sc S.\,Helgason},
Some results on invariant differential operators on symmetric spaces,
{\it Amer. J. Math.}, {\bf 114}\,(1992), no.\,4, 789--811.

\bibitem[J]{j}
{\sc A.\,Joseph}, On semi-invariants and index for biparabolics (seaweed) algebras, II, {\it J. Algebra},
{\bf 312}\,(2007), 158--193.

\bibitem[JSh]{jsh}
{\sc A.\,Joseph} and  {\sc D.\,Shafrir}, 
Polynomiality of invariants, unimodularity and adapted pairs,  
{\it Transformation Groups},  {\bf 15}\,(2010), no.\,4, 851--882.

\bibitem[K]{ko63}
{\sc B.\,Kostant}. Lie group representations on
polynomial rings, {\it  Amer. J. Math.}, {\bf 85}\,(1963), 327--404.


\bibitem[KR]{kr}
{\sc B.\,Kostant} and {\sc S.\,Rallis}, Orbits and
representations associated with symmetric spaces,
{\it Amer. J. Math.} {\bf 93}\,(1971), 753--809.

\bibitem[OR]{or} {\rusc A.V.\,Odesskii0}, {\rusc V.N.\,Rubtsov},
{\rus Polinomial1nye algebry Puassona s regulyarno{\u\i}
strukturo{\u\i} simplekticheskikh listov}, {\rusi Teoret. mat.
fiz.}, {\bf 133}\,(2002), {\rus N0}\,1, 3--23 (Russian). English
translation: {\sc A.V.\,Odesskii} and {\sc V.N.\,Rubtsov}, 
Polynomial Poisson algebras with a regular structure of symplectic
leaves, {\it Theor. Math. Phys.}, {\bf 133}\,(2002), no.\,1,
1321--1337.

\bibitem[OV]{oomsb}
{\sc A.I.\,Ooms} and {\sc M.\,Van den Bergh},
A degree inequality for Lie algebras with a regular Poisson semi-center,
{\it J. Algebra}, {\bf 323}\,(2010), 305--322.

\bibitem[P03]{dima2}
{\sc D.\,Panyushev}, The index of a Lie algebra, the
centralizer of a nilpotent element, and the normalizer of
the centralizer, {\it Math. Proc. Cambr. Phil. Soc.}, 
{\bf 134}\,(2003),  no.1, 41--59.

\bibitem[P07]{Dima06}
{\sc D.\,Panyushev},  On the coadjoint representation of $\mathbb Z_2$-contractions 
of reductive Lie algebras, {\it Adv. Math.},  {\bf 213}\,(2007), no.\,1, 380--404. 

\bibitem[P07']{Dima07}
{\sc D.~Panyushev},
Semi-direct products of Lie algebras and their invariants,
{\it Publ. R.I.M.S.}, {\bf 43}\,(2007), no.\,4, 1199--1257.

\bibitem[P09]{Dima08}
{\sc D.I.\,Panyushev}, Periodic automorphisms of Takiff algebras,
contractions, and $\theta$-groups,
{\it Transformation Groups}, {\bf 14}\,(2009), no.\,2, 463--482.

\bibitem[PPY]{ppy}
{\sc D.\,Panyushev, A.\,Premet}, and {\sc O.\,Yakimova}, On symmetric invariants of centralisers
in reductive Lie algebras, {\it J. Algebra}, {\bf 313}\,(2007), 343--391.

%\bibitem[PY08]{codim3}
%{\sc D.\,Panyushev} and  {\sc  O.\,Yakimova},
%The argument shift method and maximal commutative subalgebras
%of Poisson algebras, {\it Math. Res. Letters}, 
%{\bf 15}\,(2008), no.\,2, 239--249.

\bibitem[PY]{py-feig}
{\sc D.I.\,Panyushev} and {\sc  O.S.\,Yakimova},
A remarkable contraction of semisimple Lie algebras, \\
 {\tt arXiv:1107.0702}.

\bibitem[Pr]{sasha}
{\sc A.\,Premet}, Special transverse slices and their enveloping
algebras, {\it Adv. Math.}, {\bf 170}\,(2002), 1--55 (with an
Appendix by S.~Skryabin).


%\bibitem{Sasha2}
%{\sc A.~Premet}, Enveloping algebras of Slodowy slices and the
%Joseph ideal, {\it J. Eur. Math. Soc.}, to appear; {\tt
%arXiv:math.RT/0504343, v1}.

\bibitem[R]{r}
{\sc M.\,Ra{\"i}s}, 
L'indice des produits semi-directs $E\times_{\rho}\gt g$, \ 
{\it C. R. Acad. Sci. Paris Ser. A}, {\bf 287}\,(1978),
195--197.

\bibitem[RV]{rv}
{\sc R.\,Rentschler}, and {\sc M.\,Vergne}, 
Sur le semi-centre du corps enveloppant d'une alg{\`e}bre de Lie, 
{\it Ann. Sci. {\'E}cole Norm. Sup. (4)}, {\bf 6}\,(1973), 389--405.


\bibitem[Sl]{slodowy}
{\sc P.\,Slodowy}, ``Simple singularities and simple algebraic groups", 
Lecture Notes in Math., {\bf 815}, Springer, Berlin, 1980.
 
\bibitem[V]{var}
{\sc J.A.\,Vargas},  Fixed points under the action of unipotent elements of $\SL_n$ in
the flag variety, 
{\it Bol. Soc. Mat. Mexicana (2)}, {\bf 24}\,(1979), no.\,1, 1--14.
 
\bibitem[VO]{VO}  {\rusc {E1}.B.\,Vinberg, A.L.\,Oniwik}, {\rusi
``Seminar po gruppam Li i algeb\-rai\-qes\-kim gruppam''}. {\rus
Mosk\-va: ``Nauka"} 1988 (Russian). English translation: {\sc
A.L.~Onishchik} and {\sc E.B.~Vinberg}, ``Lie groups and algebraic
groups'', Springer, Berlin, 1990.

%\bibitem{t41}
%{\rusc {E1}.B.~Vinberg, V.V.~Gorbatseviq, A.L.~Oniwik},
%``{\rusi Gruppy i algebry Li - 3"}, {\rus
%Sovremennye problemy matematiki. Fundamental{\cprime}nye napravleniya, t.\,41.
%Moskva: VINITI} 1990 (Russian).
%English translation: V.V.\,Gorbatsevich, A.L.\,Onishchik and E.B.\,Vinberg,
%``Lie Groups and Lie Algebras" III
%(Encyclopaedia Math. Sci., vol.~41) Berlin Heidelberg New York:
%Springer 1994.

%\bibitem{fan}
%{\rusc O.~Yakimova}, {\rus Indeks centralizatorov {e1}lementov v
%klassicheskikh algebrakh Li}, {\rusi Funkc. analiz i ego
%prilozh.}, {\rus t.}{\bf 40}, {\rus N0}\,1 (2006), 52--64 (Russian). English translation:
%{\sc O.~Yakimova}, The centralisers of nilpotent elements in classical
%Lie algebras, {\it Funct. Anal. Appl.}, {\bf 40}(2006), 42--51.

%\bibitem[Ya]{contra}
%{\sc O.\,Yakimova},
%A counterexample to Premet's and
%Joseph's conjectures, {\it Bull. London Math. Soc.}, 

\bibitem[Y]{surpr}
{\sc O.\,Yakimova}, 
Surprising properties of centralisers in classical Lie algebras,
{\it Ann. Inst. Fourier (Grenoble)},
{\bf 59}\,(2009), no.\,3, 903--935.


\end{thebibliography}
\end{document}